\newtheorem{theorem}[subsubsection]{Theorem}
\newtheorem{lemma}[subsubsection]{Lemma}
\newtheorem{proposition}[subsubsection]{Proposition}
\newenvironment{remark}{\medskip \refstepcounter{subsubsection}
\noindent  {\bf Remark \thetheorem}.\rm}{\,}
\newenvironment{definition}{\medskip \refstepcounter{subsubsection}
\noindent  {\bf Definition \thetheorem}.\rm}{\,}
\def\P{\mathbb{P}}
\def\grad{\mathrm{grad}}
\def\Aut{\mathrm{Aut}}
\def\RR{\mathbb{R}}
\def\CC{\mathbb{C}}
\def\ZZ{\mathbb{Z}}
\def\CP{\mathbb{CP}}
\def\TT{\mathbb{T}}
\def\cO{\mathcal{O}}
\def\cJ{\mathcal{J}}
\def\cG{\mathcal{G}}
\def\tsheaf{\Theta}
\def\cL{\mathfrak{L}}
\def\del{\partial}
\def\delb{\overline\partial}
\def\cK{\mathcal{K}}
\def\h{\mathfrak{h}}
\def\g{\mathfrak{g}}
\def\R{\mathbb{R}}
\def\Q{\mathbb{Q}}
\def\Z{\mathbb{Z}}
\def\N{\mathbb{N}}
\def\P{\mathbb{P}}
\def\F{\mathbb{F}}
\def\C{\mathbb{C}}
\def\cG{\mathcal{G}}
\def\cJ{\mathcal{J}}
\def\cX{\mathcal{X}}
\def\Xbar{\overline{X}}
\def\Xhat{\widehat{X}}
\def\Om{\Omega}
\def\om{\omega}
\begin{document}

\title[Deformations of extremal toric manifolds]
{Deformations of extremal toric manifolds}
\author[Y. Rollin]{Yann Rollin}
\author[C. Tipler]{Carl Tipler}
\address{D\'epartment de Math\'ematiques, Laboratoire Jean Leray,
2, Rue de la Houssini\`ere - BP 92208, F-44322 Nantes, FRANCE}
\email{yann.rollin@univ-nantes.fr, 
carl.tipler@univ-nantes.fr}

\begin{abstract}
Let $X$ be a compact toric extremal K\"ahler manifold.
Using the work of Sz\'ekelyhidi \cite{sz},
we provide a combinatorial criterion on the fan describing $X$
to ensure the existence of complex deformations of $X$
that carry extremal metrics. As an example, we find new CSC
metrics on $4$-points blow-ups of $\C\P^1\times\C\P^1$.
\end{abstract}

\maketitle

\section{Introduction}
\label{secintro}
Existence of extremal K\"ahler metrics is a very hard problem initiated
by Calabi which has
been solved for some special cases. 
More precisely, given a complex manifold $X$ together with an ample
line bundle $L\to X$, we are looking for an extremal metric with
K\"ahler class $c_1(L)$.
The conjecture of Donaldson Tian and Yau,
refined by Sz\'ekelyhidi in the  extremal case, is that the existence of
such an extremal metric should be equivalent to the (relative)
K-polystability of $(X,L)$ (or some refinement of this notion).

In the case where $X$ is  a toric surface, the problem has been
completely solved by Donaldson: in this case if the Futaki invariant
vanishes, the existence of a constant scalar curvature K\"ahler metric
is equivalent to the $K$-polystability of $(X,L)$ (cf.  \cite{Don1}, \cite{Don2},\cite {Don3}).

Motivated by this result, we would like to study existence of extremal
metrics on complex surfaces with complex structure close to a toric 
complex surface carrying an extremal metric. 
The main tool to achieve this goal is the deformation
theory of constant scalar curvature Kähler (in short CSCK) metrics developed by Sz\'ekelyhidi \cite{sz}, and generalized
by Br\"onnle \cite{bro} in the extremal case.
Roughly, the idea is that small complex deformations which are stable in the GIT sense are the one carrying CSCK metrics.
In the case of toric manifolds, the space of complex deformations is described in a combinatorial way, using the fan that defines the toric variety.
This picture is particularly clear thanks to the theory of T-varieties due to Altmann, Ilten and Vollmert. Relying on \cite{ilv},
the stable deformations can be determined explicitly.

We should point out that the perturbation technique used to construct extremal
metrics is particularly nice, since it leads to a local
version of the
Donaldson-Tian-Yau conjecture:
let  $X\hookrightarrow\cX\to B$ be a family of complex deformations of $X\simeq
\cX_0$, where $B$ is an open neighborhood of the origin in some
complex vector space. Let  $\cL\to \cX$ be a polarization
of the deformation, that is a holomorphic line
bundle such that the restriction $\cL_t\to \cX_t$ is ample for all
$t\in B$. Assume that $\Omega=c_1(\cL_0)$ is represented by the K\"ahler class
of an extremal metric $\omega_0$ on $\cX_0$. Let $H$ be 
 the  group of Hamiltonian isometries of $\omega_0$ and
 $G\subset H$ be a compact connected Lie groups acting
holomorphically on $\cX$ and fixing the fibers of $\cX\to B$.
We are also assuming that the Lie algebra of $G$ contains the extremal vector field of
$\omega_0$. The latter condition is equivalent to the vanishing of the
reduced scalar curvature $s^G_{\omega_0}$.
Then up to the cost of shrinking  $B$ to a sufficiently small neighborhood
  of the origin, we have the following property in the case where $G$
  is a torus :
for every $t\in B$ such that $\cL_t\to\cX_t$ is K-polystable relative
to $G$, the complex manifold  $\cX_t$ carries an extremal metric.
In the case where  $G$ is a maximal torus of $\Aut(\cX_t)$,  the condition of K-polystability is also
  necessary  by a result of Stoppa-Sz\'ekelyhidi \cite{ss09}. In the
  case where $G$ is a torus but is not maximal in $\Aut(\cX_t)$, the
  latter statement is not clear although it would be reasonable to
  expect such a result (cf. \S\ref{sec:polarized} for details).

\subsection{A typical example}
\label{sec:exintro}
Before stating general results, we would like to start with a nice and
simple example (see section \ref{sec:desccompat} for a detailed study). Here we have a CSCK surface which admits complex deformations of different types.
Some of them are CSCK whereas others do not admit any CSCK metric. This is closely related to the behaviour of the Mukai-Umemura
$3$-fold and its deformations \cite{Don08} and we believe that the general theory should benefit from the study of such situations.

We endow $\CP^1\times\CP^1$ with a CSCK metric deduced from a
product of metrics of constant curvature metrics on each factors. Then
we get a CSCK orbifold $\Xbar=(\CP^1\times \CP^1)/\ZZ_2$ where the
action of $\ZZ_2$ is generated by a rotation of order $2$ of each
factor. The minimal resolution $\Xhat\to \Xbar$ is a $4$-points
blow-up of $\CP^1\times\CP^1$. More concretely, let $p_+=[0:1]$ and
$p_-=[1:0]$ be two points on $\CP^1$. The points $p_\pm$  are fixed under the
$\CC^*$-action defined by $\lambda\cdot [x:y]= [\lambda x:y]$. We
deduce a toric action on $\CP^1\times \CP^1$ with four fixed points
$$P_0=(p_+,p_+), P_1 = (p_+,p_-), P_1'=(p_-,p_+) \text{ and } P_\infty=(p_-,p_-).$$
Blowing up the fixed points $P_j$, we obtain the resolution $\Xhat$, with
the induced toric action.

It is known that
$\Xhat$ carries a CSCK metric $\omega$ with K\"ahler class denoted
$\Omega\in H^2(\Xhat,\RR)$ (cf. \cite{rs}) and we are trying to
understand which small complex deformations of $\Xhat$ are also extremal.
In addition, the CSCK metric can be chosen to have integral K\"ahler
class $\Omega$.

Let $\Xhat \hookrightarrow \cX\to B$ be a
toric semiuniversal family of deformations (cf. Definition
\ref{def:semiheq}) of $\Xhat$ as above. 
Here $B$ is identified to a neighborhood of the origin
 in 
$H^1(\Xhat,\tsheaf_{\Xhat})$ where $\tsheaf_{\Xhat}$ denotes the
tangent sheaf to $\Xhat$. 
Then  $H^1(\Xhat,\tsheaf_{\Xhat})$ admits a basis  $(e_1,e_2,e_3,e_4) $
such that the complex deformation associated to $(x_1,x_2,x_3,x_4)\in
B$ corresponds to moving  the blown-up points $P_0$ and $P_\infty$ 
given by the coordinates $P_0(x_1,x_3)=([x_1:1],[x_3:1])$ and $P_\infty(x_2,x_4)=([1:x_2]\times[1:x_4])$. 
The deformation $\cX$ is endowed with a natural action of the
real torus.
In this basis, the toric action of $(\lambda,\mu)\in \TT^\CC$ is represented by the matrix
$$
\left[ 
\begin{array}{cccc}
                                          \lambda & 0 & 0 & 0 \\
                                            0  & \lambda^{-1} & 0 & 0 \\
                                             0 & 0 & \mu & 0 \\
                                               0 & 0 & 0 & \mu^{-1}
                                         \end{array} 
\right].
$$
Thus, using the induced isomorphism  $H^1(\Xhat,\tsheaf_{\Xhat})\simeq
\CC^4$ the set of polystable points under the toric action in the GIT
sense is given by  
\begin{equation}
  \label{eq:U}
U= 
U_0\cup U_2'\cup U_2''\cup U_4  
\end{equation}
where 
\begin{align*}
  U_0 &=\{0\}\\
U'_2 &=  \{ (x_1,x_2,0,0)\in \C^4, x_1x_2\neq 0\} \\
U''_2 &=  \{ (0,0, x_3,x_4)\in \C^4, x_3x_4\neq 0\}\\
U_4 &=  \{ (x_1,x_2,x_3,x_4)\in \C^4, x_1x_2x_3x_4\neq 0\}
\end{align*}
We should point out that the only toric variety is $\cX_0$ whereas
$\cX_t$ admits only a residual $\CC^*$-action for $t\in U'_2\cup
U''_2$ and no holomorphic holomorphic vector field if $t\in U_4$.
 Then we have the following result:
\begin{theorem}
Let $\Xhat$ be the complex surface described above,  $\Omega$  the K\"ahler class of a constant scalar curvature
K\"ahler metric on $\Xhat$, and
 $\Xhat\hookrightarrow\cX\to B$,  a semiuniversal toric family of deformations of $\Xhat$,
 where $B$ is an open neighborhood of  the origin in
 $H^1(\Xhat,\tsheaf_{\Xhat})$. 

  Up to the cost of shrinking $B$ to a smaller open neighborhood of
  the origin in
  $H^1(\Xhat,\tsheaf_{\Xhat})$, we have the following property:
For $t\in B$, the complex surface $\cX_t$ admits a K\"ahler  metric of
constant scalar curvature with K\"ahler class $\Omega$ if $t\in U $,
where $U$ is the set of polystable points described at \eqref{eq:U}.
If $\Omega$ is an integral cohomology class, this condition is also necessary.
\end{theorem}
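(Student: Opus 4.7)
The proof combines the local existence result recalled in the introduction with an explicit GIT analysis of the torus action on $H^1(\Xhat,\tsheaf_{\Xhat})\simeq\CC^4$. The natural group to use is $G=\TT^2$, the real torus acting on $\cX\to B$ and fixing the fibres, arising from the toric structure on $\Xhat$. Since $\omega_0$ is CSCK, its extremal vector field vanishes and trivially lies in $\mathrm{Lie}(G)$, so the hypotheses of the local Donaldson--Tian--Yau statement are satisfied: up to shrinking $B$, any $t\in B$ that is K-polystable relative to $G$ gives a fibre $\cX_t$ carrying an extremal K\"ahler metric in class $\Omega$.

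The first concrete step is to determine the GIT polystable locus in $\CC^4$ for the $\TT^\CC$-action given by the diagonal matrix with weights $(\lambda,\lambda^{-1},\mu,\mu^{-1})$. The invariant ring is generated by $x_1x_2$ and $x_3x_4$, and an orbit-closedness check via the Hilbert--Mumford criterion yields precisely $U=U_0\cup U'_2\cup U''_2\cup U_4$: any point outside $U$ has exactly one of $x_1,x_2$ (or one of $x_3,x_4$) equal to zero while the other is not, so a one-parameter subgroup of $\TT^\CC$ drives that non-zero coordinate to $0$ and breaks orbit closedness.

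For sufficiency, the deformation theorem then produces an extremal K\"ahler metric $\omega_t$ on $\cX_t$ in class $\Omega$ for every $t\in U$ close enough to the origin. To promote extremal to CSCK, observe that the Futaki invariant of $(\cX_t,\Omega)$, restricted to any subalgebra of $\mathrm{Lie}(G^\CC)$ acting by automorphisms of $\cX_t$, is the restriction of the Futaki invariant of $(\Xhat,\Omega)$, which vanishes because $\omega_0$ is CSCK. Hence the extremal vector field of $\omega_t$ is zero and $\omega_t$ has constant scalar curvature.

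For the converse under the integrality hypothesis, suppose $\Omega$ is integral and $\cX_t$ admits a CSCK metric in class $\Omega$ for some $t\in B$. By Stoppa--Sz\'ekelyhidi \cite{ss09}, $(\cX_t,\cL_t)$ is K-polystable relative to a maximal torus of $\Aut(\cX_t)$. If $t\notin U$, a one-parameter subgroup $\lambda(s)\subset\TT^\CC$ witnessing GIT instability produces, through the equivariant polarization $\cL$ on $\cX$, a non-trivial test configuration for $(\cX_t,\cL_t)$ with central fibre $\cX_{t_0}$ and Donaldson--Futaki invariant matching the GIT weight at $t_0$; this weight is negative by construction, contradicting K-polystability. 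The main obstacle is precisely this converse: the sufficient direction amounts to bookkeeping once one accepts the deformation theorem and computes the polystable locus, whereas translating finite-dimensional GIT instability in the Kuranishi slice $B$ into a genuine destabilizing test configuration for $(\cX_t,\cL_t)$, and identifying the Donaldson--Futaki invariant with the GIT weight at the central fibre, is where the combination of \cite{sz,ss09} with the T-variety description \cite{ilv} of the family $\cX\to B$ is essential.
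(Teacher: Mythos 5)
Your proposal is correct and follows the same overall architecture as the paper: reduce to finite-dimensional GIT via Sz\'ekelyhidi's equivariant slice, compute the polystable locus of the weight-$(\pm e_1^*,\pm e_2^*)$ action on $H^1(\Xhat,\tsheaf_{\Xhat})\simeq\CC^4$ explicitly (your identification of the invariant ring as $\CC[x_1x_2,x_3x_4]$ and the Hilbert--Mumford check agree with the paper's Propositions \ref{ss} and \ref{p} specialized in \S\ref{sec:desccompat}), and obtain necessity in the integral case from \cite{ss09} combined with the passage from GIT instability in the slice to a destabilizing test configuration as in \cite{sz}. The one step where you genuinely diverge is how constancy of the scalar curvature is obtained. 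The paper never produces a merely extremal metric and then improves it: it runs the moment-map argument with the reduced scalar curvature $s^G$ taken relative to the trivial group $G$ (projection onto constants only, which is legitimate here since the extremal field of $\omega_0$ vanishes), so that zeros of the finite-dimensional moment map $\mu^G$ are already CSCK, and the relevant stability group is the full $K_G^{\CC}=H^{\CC}=\TT^{\CC}$. You instead apply the theorem with $G=\TT^2$, get an extremal metric with extremal field in $\mathrm{Lie}(\TT^{\CC})$, and then argue the Futaki invariant of $(\cX_t,\Omega)$ vanishes on the residual torus. That route is valid, but it silently uses that the Futaki invariant is \emph{locally constant} (not merely continuous) along the family for the fixed class $\Omega$ and a fiber-preserving $\CC^*$-action; continuity alone does not force it to vanish at $t\neq 0$. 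This invariance is standard for integral $\Omega$ (the Donaldson--Futaki weights $w_k$, $d_k$ are constant in a flat polarized family) and extends to real classes by polynomial dependence on $\Omega$, but it should be stated as an input; note also that for $t\in U_4$ the issue is vacuous since $\cX_t$ has no holomorphic vector fields. The paper's choice of trivial $G$ avoids this extra ingredient entirely.
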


\begin{remark}
If $\Omega$ is integral, the special deformations given by $B \setminus U$ do
 not carry extremal metrics with K\"ahler class $\Omega$. They play a
 role analogue to
 the famous family of deformations of the Mukai-Umemura $3$-fold given
 by Tian \cite{Ti}.
\end{remark}

\subsection{General results}
The $K$-polystability condition is generally very hard to check. In
fact, there is a much simpler condition that we shall use in practice.
Let $X$ be a toric manifold such that
the real torus $\TT$ is a maximal  connected compact subgroup of
$\Aut(X)$ and $H^2(X,\tsheaf_X)=0$. Then by Kodaira-Spencer theorem $X$ admits a
 semiuniveral toric family of complex deformations
$X\hookrightarrow \cX\to B$ (cf. Definition \ref{def:semiheq}).

Let $\Omega$ be a K\"ahler class represented by the K\"ahler form of a
CSCK metric on $X$.
As $X$ is toric we have $h^{0,2}(X)=0$ hence $\Omega$ belongs to the K\"ahler cone of $\cX_t$
for all $t\in B$ sufficiently small. The question is now whether there
exists a CSCK metric on $\cX_t$ with K\"ahler class $\Omega$.

 The complex torus
$\TT^\CC$ acts naturally on $H^1(X,\tsheaf_X)$ and it follows from 
 Sz\'ekelyhidi's results that 
 $\cX_t$ carries a CSCK metric with K\"ahler class $\Omega$
if   $t\in B$  is sufficiently small and belongs to a polystable
orbit of $H^1(X,\tsheaf_X)$ under the  $\TT^\CC$-action.

In the toric case, $H^1(X,\tsheaf_X)$ is easily described in terms of the fan
defining the toric manifold. Moreover the torus action is also
explicit and the weights are readily computed. It follows that we have
an easy numerical criterion to characterize polystable orbits
as explained below (see \S\ref{stable} for the proof).

Let $\Sigma$ be the fan describing $X$ in a lattice $N$ and let $N^*$ denote the dual of the lattice $N$. 
Let $\Sigma^{(1)}$ be the set of rays in $\Sigma$, identified with primitive generators
of these rays. Following Ilten and Vollmert \cite{ilv}, we can compute from the fan $\Sigma$ a finite set
$N_{def}^*(\Sigma)\subset N^*$ (cf. \S\ref{stable}) which is the set of weights of the torus action on $H^1(X,\tsheaf_ X)$.
Then $H^1(X,\tsheaf_ X)$ admits a decomposition of the form:
$$
H^1(X,\tsheaf_ X)=\bigoplus_{R\in N^*_{def}(\Sigma)} H^1(X,\tsheaf_ X)(R).
$$
We proceed now with some definitions in order to state our main results.
 We say that a nonempty finite family $R_{1},..,R_{r} \in N^*$ is
 \textit{balanced}
if there exist positive integers
$a_1,\cdots,a_r$ such that $  a_1R_1+\cdots + a_r R_r =0$.
For each $R\in N^*\setminus  0 $, we introduce the sets
\begin{align*}
\lbrace R<0 \rbrace &=\lbrace x\in N |  \langle R, x \rangle <0 \rbrace 
\end{align*}
and
\begin{align*}
\lbrace R=0 \rbrace &=\lbrace x\in N |  \langle R, x \rangle =0 \rbrace.
\end{align*}
As $N^*_{def}$ is finite, we shall use the notation
$N^*_{def}=\{R_1,\cdots,R_s\}$. 
Then let  $\mu(\Sigma)$ be the set of all multi-indices
$I\subset \lbrace 1,..,s \rbrace$, such that
\begin{enumerate}
\item there exists a subfamily $J\subset I$ such that $\{R_{j}, j\in J\}$
is balanced and
\item $
\displaystyle N = \left (\bigcup_{i\in I} \lbrace R_i<0 \rbrace \right ) \cup \left
  ( \bigcap_{i\in I}  \lbrace R_i=0 \rbrace \right ).
$
\end{enumerate}
\begin{remark}
  Condition $(2)$ is automatically satisfied if $\{R_i,i\in I\}$ is
  balanced. Therefore $I\in \mu(\Sigma)$ and it follows that
  $\mu(\Sigma)\neq \emptyset$ in this case. 
\end{remark}

\noindent For each family of indices $I\subset \{1,\cdots,s\}$, we consider the direct sum $$W_I=\bigoplus
_{i\in I} H^1(X,\tsheaf_ X)(R_i).$$
 Each vector $x\in W_I$ is written  $x=\sum_{i\in I}
x_i$  with $x_i\in H^1(X,\tsheaf_ X)(R_i)$.
Let $V_I\subset W_I$ be the finite union of
subvector spaces given by the equations $x_i=0$ for some $i\in I$.
Put
$$
S_I=W_I \setminus V_I,
$$
Then the set of polystable points $H^1(X,\tsheaf_ X)^{p}$ is given by
the following proposition:

\begin{proposition}
Let $X$ be a smooth compact toric manifold given by a fan $\Sigma$ in a
lattice $N$.
Then, the set of polystable points of  $H^1(X,\tsheaf_ X)$ under the toric
action is given by
$$
H^1(X,\tsheaf_ X)^{p} =\{0\} \cup \bigcup_{I\in \mu(\Sigma)} S_I.
$$
In particular $H^1(X,\tsheaf_ X)^{p}\setminus\{0\}$ is not empty 
if and only if there is a balanced  family  in $N^*_{def}(\Sigma)$.
\end{proposition}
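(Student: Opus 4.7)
The plan is to reduce to the classical GIT criterion for a torus acting linearly on a vector space, and then translate that criterion into conditions (1) and (2). Write $V:=H^{1}(X,\tsheaf_X)$ with its weight decomposition $V=\bigoplus_i V(R_i)$, and for $v=\sum_i v_i\in V$ set $\mathrm{supp}(v):=\{i : v_i\neq 0\}$. The essential input is the Hilbert--Mumford / Kempf--Ness criterion for a torus action on a linear representation: the orbit $\TT^{\CC}\cdot v$ is closed---i.e.\ $v$ is polystable---if and only if $0$ lies in the relative interior of $\mathrm{conv}\{R_i : i\in\mathrm{supp}(v)\}\subset N^{*}\otimes\RR$, equivalently if and only if there exist strictly positive $a_i>0$ with $\sum_{i\in\mathrm{supp}(v)} a_i R_i=0$. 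I will call such a family \emph{strictly balanced}.

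Since $S_I$ is visibly the set of $v\in V$ with $\mathrm{supp}(v)=I$, identifying the polystable locus (beyond the trivially polystable origin) reduces to characterising those $I$ for which $\{R_i\}_{i\in I}$ is strictly balanced. The main step is then to prove the combinatorial equivalence
$$\{R_i\}_{i\in I}\ \text{is strictly balanced}\iff I\in\mu(\Sigma).$$
For $(\Rightarrow)$, take $J=I$ in (1); and if $x\in N$ satisfies $\langle R_i,x\rangle\geq 0$ for all $i\in I$, pairing with the strict balancing gives $0=\sum_i a_i\langle R_i,x\rangle$ with $a_i>0$, forcing every $\langle R_i,x\rangle=0$, which is precisely (2). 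For $(\Leftarrow)$, condition (1) ensures $0\in C:=\mathrm{conv}\{R_i\}_{i\in I}$; if $0\notin\mathrm{relint}(C)$, then $0$ lies on a proper face of $C$ cut out by a rational supporting functional, and after clearing denominators this yields $x\in N$ with $\langle R_i,x\rangle\geq 0$ for every $i\in I$ and $\langle R_{i_0},x\rangle>0$ for some $i_0$, contradicting (2). Hence $0\in\mathrm{relint}(C)$, and strict balancing follows by Carath\'eodory.

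The only genuinely delicate point is this last separation step, where a real hyperplane separating $0$ from $\mathrm{relint}(C)$ must be promoted to an integral cocharacter in $N$ in order to apply (2); this is handled by rational approximation, using finiteness of $\{R_i\}\subset N^{*}$. Assembling the GIT criterion with the above equivalence yields the stated description $H^{1}(X,\tsheaf_ X)^{p}=\{0\}\cup\bigcup_{I\in\mu(\Sigma)} S_I$. The ``in particular'' clause then follows at once: any balanced family in $N^{*}_{def}(\Sigma)$ is, by the remark preceding the proposition, an index set $I\in\mu(\Sigma)$ producing a nonempty $S_I\subset H^{1}(X,\tsheaf_X)^{p}\setminus\{0\}$; conversely any $I\in\mu(\Sigma)$ contains a balanced subfamily by (1).
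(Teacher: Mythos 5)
Your proof is correct, and it runs on the same engine as the paper's --- the Hilbert--Mumford criterion for a linear torus action --- but it packages that criterion differently. The paper never introduces the weight polytope: it parametrizes one-parameter subgroups by $p\in N$, computes $\lambda_p\cdot x=\sum_j\lambda^{\langle R_j,p\rangle}x_j$ explicitly, and decides closedness of the orbit by checking, for each $p$, whether the limit as $\lambda\to 0$ exists and remains in the orbit; there the balanced subfamily of condition (1), inherited from the semistability analysis of Proposition~\ref{ss}, forces the weights either to take both signs on $p$ (no limit) or to all vanish on $p$, and condition (2) drops out directly. You instead invoke the equivalent convex-geometric form of the criterion --- $x\neq 0$ is polystable iff $0\in\mathrm{relint}\,\mathrm{conv}\{R_i:i\in\mathrm{supp}(x)\}$, i.e.\ iff the full support is \emph{strictly} balanced --- and then prove the purely combinatorial equivalence between strict balancing and (1)$+$(2) by a separation argument. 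Your route has the merit of making explicit a fact the paper leaves implicit, namely that $I\in\mu(\Sigma)$ exactly when $\{R_i\}_{i\in I}$ admits a relation $\sum_{i\in I} a_iR_i=0$ with \emph{all} $a_i>0$; the paper's route is more self-contained, since it never needs the relative-interior criterion as a black box. Two small points, neither affecting correctness: the rationality of the separating functional is indeed the delicate step and is automatic because $\mathrm{conv}\{R_i\}_{i\in I}$ is a rational polytope, so its proper faces are cut out by functionals in $N\otimes\Q$; and the last step is not really Carath\'eodory --- what you need is the standard fact that every point of the relative interior of a polytope is a strictly positive convex combination of \emph{all} of its generators (e.g.\ average an arbitrary representation of $0$ with the barycenter).
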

As an application  we obtain the following result:
\begin{theorem}
\label{equivCSCi}
Let $X$ be a smooth compact toric manifold defined by a fan $\Sigma$
in a lattice $N$ and let $g$ be a K\"ahler metric of constant scalar
curvature on $X$, with K\"ahler class $\Omega$, such that   its group of Hamiltonian isometries  $H$ 
satisfies $H^{\C}=\TT^{\C}$. 

Assuming that $H^2(X,\tsheaf_
X)=0$ we consider  the semiuniversal toric family of deformations
$X\hookrightarrow \cX\to
B$ of
$X\simeq \cX_0$, with $B$ identified to an open neighborhood of 
 the
origin in $H^1(X,\tsheaf_
X)$.

Then, up to the cost of shrinking $B$ to a sufficiently small open neighborhood
of the origin, the deformation $\cX_t$ for $t\in B\setminus 0$ admits a K\"ahler
metric of constant scalar curvature with K\"ahler class $\Omega$ if 
 $t\in  \bigcup_{I\in \mu(\Sigma)} S_I$. This condition is also
 necessary if the K\"ahler class $\Omega$ is integral.

In particular, $X$ admits non trivial complex deformations endowed with
constant scalar curvature K\"ahler metrics representing the K\"ahler class $\Omega$
if there is a balanced family in
$N^*_{def}(\Sigma)$.
\end{theorem}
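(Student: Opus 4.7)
The argument combines the preceding Proposition, which describes the $\TT^\C$-polystable locus of $H^1(X,\tsheaf_X)$ combinatorially, with the local Donaldson--Tian--Yau principle sketched in \S\ref{secintro}: namely the Sz\'ekelyhidi--Br\"onnle deformation-theoretic perturbation of the extremal (here CSCK) metric $\omega$ along the toric semiuniversal family $\cX\to B$. The key observations are that $\omega$ being CSCK forces the extremal vector field to vanish, so it sits trivially in $\t$, and that $\TT$ acts fiberwise on $\cX\to B$ by construction of the toric semiuniversal family.

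For the sufficient direction, fix $t\in S_I$ for some $I\in\mu(\Sigma)$. By the Proposition, the orbit $\TT^\C\cdot t\subset H^1(X,\tsheaf_X)$ is polystable. Sz\'ekelyhidi's analytic GIT picture on the Kuranishi space identifies polystability of $t$ with relative K-polystability of the small deformation, and Br\"onnle's extremal perturbation theorem then yields a CSCK metric on $\cX_t$ in the class $\Omega$, at the cost of shrinking $B$ once and for all so that both the perturbation argument and the GIT identification apply.

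For the necessary direction, assume $\Omega$ is integral and $\cX_t$ carries a CSCK metric in class $\Omega$ for some $t\neq 0$. After possibly shrinking $B$, integrality lets us spread $\Omega$ to a polarization $\cL\to\cX$ of the entire family. The hypothesis $H^\C=\TT^\C$ says $\TT$ is a maximal torus in $\Aut(X)$, and upper semi-continuity of the automorphism group in a smooth family ensures $\TT$ remains a maximal torus in $\Aut(\cX_t)$ for $t$ small. The Stoppa--Sz\'ekelyhidi theorem then forces $(\cX_t,\cL_t)$ to be K-polystable relative to $\TT$; running the Kuranishi bridge in the opposite direction, $t$ is $\TT^\C$-polystable in $H^1(X,\tsheaf_X)$, and the Proposition places it in $\{0\}\cup\bigcup_I S_I$, whence $t\in\bigcup_I S_I$ since $t\neq 0$.

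The main obstacle I expect is the setup rather than either direction in isolation: one must verify that the Sz\'ekelyhidi--Br\"onnle framework applies cleanly to this toric context, i.e.\ that the Kuranishi space of $X$ is $\TT^\C$-equivariantly modelled on a neighborhood of the origin in $H^1(X,\tsheaf_X)$, carrying exactly the action whose weights are the combinatorial set $N^*_{def}(\Sigma)$ computed from the fan. Granted this identification, the Proposition does the combinatorial bookkeeping. The final clause about balanced families is then immediate: a balanced family in $N^*_{def}(\Sigma)$ produces some $I\in\mu(\Sigma)$, hence a nonempty stratum $S_I$ of nontrivial polystable deformations, and the first half of the theorem provides the desired CSCK metrics.
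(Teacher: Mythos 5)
Your overall route is the paper's: establish that the toric semiuniversal family is $\TT^\C$-equivariantly modelled on a neighborhood of $0$ in $H^1(X,\tsheaf_X)$ with weights $N^*_{def}(\Sigma)$ (Lemma \ref{gcase} together with Lemma \ref{action}), deduce sufficiency from the finite-dimensional Kempf--Ness statement (Proposition \ref{linearstability}) combined with the combinatorial description of the polystable locus (Proposition \ref{p}), and deduce necessity in the integral case from the polarized discussion of \S\ref{sec:polarized} via Stoppa--Sz\'ekelyhidi. One remark on the sufficient direction: the paper does not pass through K-polystability there at all — polystability of $t$ feeds directly into the moment-map argument of Proposition \ref{linearstability} — and your detour through ``polystability $\Rightarrow$ relative K-polystability'' uses an implication the paper only establishes in the opposite direction; drop it and quote Proposition \ref{linearstability} directly.

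There is, however, one concretely wrong step in your necessity argument: the claim that ``upper semi-continuity of the automorphism group ensures $\TT$ remains a maximal torus in $\Aut(\cX_t)$ for $t$ small.'' The torus $\TT$ does not even act on the fiber $\cX_t$ unless $t$ is fixed by $\TT$; only the stabilizer of $t$ in $\TT^\C$ descends to $\cX_t$, and for generic $t$ the deformed manifold has no nontrivial continuous automorphisms at all (in the example of \S\ref{sec:exintro}, $\cX_t$ for $t\in U_4$ carries no holomorphic vector fields, while $t\in U_2'$ retains only a residual $\C^*$). Semicontinuity goes the other way: the automorphism group can only drop. The correct invocation, as in Theorem \ref{extremaldefo}, is of Stoppa--Sz\'ekelyhidi relative to a maximal torus of $\Aut(\cX_t)$ itself (often trivial), followed by Sz\'ekelyhidi's converse identifying K-polystability of the polarized small deformation with $\TT^\C$-polystability of $t$. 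Note also that the paper itself flags (end of \S\ref{secintro}) that when the fiberwise torus is not maximal in $\Aut(\cX_t)$ the necessity statement is delicate, so this step deserves care rather than a one-line appeal to semicontinuity.
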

We also have a more general result which can be used to deform
extremal metrics. In this case we have to work with complex
deformations preserving the extremal vector field and the stability is
replaced by a condition of relative stability modulo a subtorus that
contains the extremal vector field~(cf. \S\ref{equivextremal}).

\begin{remark}
The case of surfaces deserves special attention as in that case
$H^2(X,\tsheaf_ X)=0$ and a simple combinatorial criterion on the fan
ensures that $H^{\C}=\TT^{\C}$. It follows that the previous theorem easily
provides numerous examples of new extremal  metrics on deformations of
toric
 surfaces.
\end{remark}

\subsection{Plan of the paper}
The deformation theory of extremal metrics following Sz\'ekelyhidi and Br\"onnle is recalled at Section~\ref{deformation}. 
Section~\ref{toric} is devoted to investigate the stability criterion for toric manifolds and
in the last section we provide applications.

\subsection{Acknowledgments}
The authors would like to thank Simon Donaldson and Gabor Sz\'ekelyhidi who kindly
answered our questions. We are most indebted to Paul Gauduchon who
provided us with a copy of his excellent upcoming book \cite{gbook}.  We also thank Till Br\"onnle for communicating his thesis,
Nathan Ilten for sharing his insights on T-varieties as well as Yalong Shi and
Haozhao Li for some useful remarks. The authors would also like to
thank the referee for helpful comments.

\section{Deformations of extremal metrics}
\label{deformation}
Let $X$ be a compact K\"ahler manifold with K\"ahler metric $g$,
K\"ahler form $\om$ and
complex dimension $n$. 
The complex manifold $X$ is understood as a pair $(M,J_0)$ where 
$M$ is the underlying differentiable manifold  and $J_0$ is the
integrable almost complex structure.
From now on, the metric $g$ is assumed to be  extremal, or equivalently, 
$J_0\grad_g(s_g)$ is a Killing field. 

Let $\cJ$ be the space of almost complex structures on $M$ compatible
with  $\om$. The scalar curvature can be interpreted as a moment map
for the action on $\cJ$ induced by Hamiltonian transformations
of $(M,\omega)$~\cite{fuj,Don97}. This beautiful formal picture can be used
in practice to study deformations of extremal metrics (with fixed
K\"ahler class) in the CSCK case \cite{sz}. 

We show in the next sections how Székelyhidi's approach extends in the case
of extremal metrics of non constant scalar curvature. A similar
extension appears in an independent work of Br\"onnle \cite{bro} with a
rather different presentation and results. 

Although such a generalization seems very natural to experts in the
field, we shall give an account of the objects and techniques used for
the sake of completeness.

\subsection{The relative moment map}
In order to obtain smooth deformations of the extremal metric $g$, it is necessary to preserve the action of the extremal vector field $J_0\grad_g(s_g)$.

Let $H$ be the compact group of 
Hamiltonian isometries of $(M,J_0,\om)$ and $\cG$ the group of Hamiltonian transformations of $(M,\omega)$. The extremal
vector field generates an action by isometries on $(M,J_0,\om)$ corresponding to a subgroup $H_s$ of $H$. 
Let $G$ be a connected compact subgroup of $H$ containing $H_s$
as a subgroup, and $\cJ^{G}$ the subspace of $\cJ$ of almost complex structures
that are $G$-invariant. Denote by $\cG_G$ the normalizer of
$G$ in $\cG$. Then $\cK=\cG_G/G$ acts on $\cJ^{G}$.
The space of momenta, including constants, of elements of
the Lie algebra $\g$ of $G$ is denoted  $P^G_{\om}$ . If $v\in \h$, the Lie algebra of $H$, we choose the momenta $f_v$  to satisfy
$$
 -df_v=\om(v,\cdot)
$$
and the normalization
$$
\int_M f_v \om^n = 0,
$$
and we set $v=v_f$.
Then define $\Pi_{\om}^G$ to be the $L^2$-orthogonal projection
from $C^{\infty}(M)$ onto $P_{\om}^G$ induced by the volume form $\omega^n$.

\begin{definition}
 Let $J\in \cJ^G$. The reduced Hermitian scalar curvature $s_J^G$ of $(M,J,\om)$
is defined by
$$
s_J^G=s_J - \Pi_{\om}^G(s_J),
$$
where $s_J$ is the Hermitian scalar curvature of the almost K\"ahler
metric defined by $\omega$ and $J$.
\end{definition}

Using the Hamiltonian construction, the Lie algebra of $\cK$ is identified with the space
of $G$-invariant functions of mean value zero $C^{\infty}_0(M)^G$. This space is isomorphic to its dual via the $L^2$
inner product induced by $\om$.
Then the following is a generalization of Fujiki's \cite{fuj} and Donaldson's \cite{Don97} work 

\begin{proposition}[\cite{gbook}]
The space $\cJ^G$ admits a K\"ahler structure such that
 the action of $\cK$ on $\cJ^G$ is Hamiltonian and its moment map is given by
$$
\begin{array}{cccc}
 & \cJ^G & \rightarrow & C_0^{\infty}(M)^G \\
 & J & \mapsto & s^G_J
\end{array}
$$
\end{proposition}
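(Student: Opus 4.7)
The plan is to derive this proposition as a relative, $G$-equivariant version of the classical Fujiki--Donaldson theorem \cite{fuj,Don97}. I proceed in three steps: first endow $\cJ^G$ with a K\"ahler structure, then check that the $\cK$-action is well-defined and K\"ahler, and finally compute the moment map by restriction and $L^2$-projection.

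For the K\"ahler structure, I recall that $\cJ$ is formally a K\"ahler manifold: at $J$, the tangent space consists of sections $A$ of $\mathrm{End}(TM)$ anticommuting with $J$ and satisfying an $\omega$-compatibility, the complex structure on $T_J\cJ$ is $A\mapsto JA$, and the metric is an $L^2$ pairing weighted by $\omega^n$. The group $\cG$ acts on $\cJ$ by pullback, preserving this structure. Since $G\subset \cG$ is compact and acts by K\"ahler isometries, its fixed-point set $\cJ^G$ is a totally geodesic K\"ahler submanifold of $\cJ$ and inherits all of this structure.

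To see that $\cK$ acts on $\cJ^G$, I would check that for $\phi\in \cG_G$ and $h\in G$ the normalizer property gives $\phi^{-1}h\phi\in G$, so if $J$ is $G$-invariant then so is $\phi\cdot J$; thus $\cG_G$ preserves $\cJ^G$. By the very definition of $\cJ^G$ the subgroup $G$ acts trivially, so the action descends to $\cK=\cG_G/G$ and visibly preserves the K\"ahler structure.

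For the moment map, I would invoke the Fujiki--Donaldson formula: the $\cG$-moment map on $\cJ$ is $J\mapsto s_J-\hat s$, where $\hat s$ denotes the topological average, under the $L^2$-identification of $\mathrm{Lie}(\cG)^*$ with $C^\infty_0(M)$. Restricting this to the $\cG_G$-action on $\cJ^G$ simply restricts the pairing to $\mathrm{Lie}(\cG_G)=C^\infty_0(M)^G$, giving the same formula tested only against $G$-invariant functions (which is well-defined since $s_J$ is itself $G$-invariant once $J$ is). Descending to $\cK=\cG_G/G$ requires the moment map to land in the annihilator of $\g\subset\mathrm{Lie}(\cG_G)$; under the $L^2$-identification this amounts to subtracting the component in $\g\oplus\mathbb{R}=P^G_\omega$, which yields exactly $s_J-\Pi^G_\omega(s_J)=s^G_J$. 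The main delicate point is the bookkeeping of normalizations: one must verify that removing the mean value together with the Killing potentials corresponds precisely to the single projection $\Pi^G_\omega$ as defined, and that the usual equivariance properties of the classical Fujiki--Donaldson moment map descend correctly to the quotient $\cK$.
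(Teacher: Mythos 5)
The paper does not prove this proposition at all: it is stated with an attribution to Gauduchon's book \cite{gbook} and introduced only as ``a generalization of Fujiki's and Donaldson's work,'' so there is no internal argument to compare yours against. Your sketch is the natural derivation and is sound at the formal level at which such statements are usually established: the K\"ahler structure on $\cJ$, the fact that the fixed locus $\cJ^G$ of a compact group of K\"ahler isometries is a totally geodesic complex submanifold, the normalizer computation showing $\cG_G$ preserves $\cJ^G$, and the restriction of the Fujiki--Donaldson moment map $J\mapsto s_J-\hat s$ followed by projection off $P^G_\omega$ are exactly the ingredients one expects. Two points deserve more care than your last paragraph's ``bookkeeping'' remark suggests. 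First, the identification $\mathrm{Lie}(\cK)\simeq C^\infty_0(M)^G$ is cleanest when the momenta of $\g$ are themselves $G$-invariant (automatic for $G$ a torus, which is the case ultimately used in the paper); for nonabelian $G$ one should distinguish the normalizer from the centralizer of $\g$ inside $C^\infty_0(M)$. Second, the reason the projection step is legitimate is that for $v\in\g$ the component $\int_M s_J f_v\,\omega^n$ is \emph{locally constant} in $J$ along $\cJ^G$ (its Hamiltonian vector field is the vector field induced by $v$, which vanishes on the $G$-fixed locus), so subtracting $\Pi^G_\omega(s_J)$ really does produce a moment map for the quotient group $\cK$ rather than merely a convenient normalization. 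With those two remarks made explicit, your argument is a complete formal proof of the statement the paper takes on faith from \cite{gbook}.
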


It is a hard problem in general to find zeros of this moment map. We will show in the next section that if we start from a zero,
then looking for nearby zeros can be reduced to a finite dimensional problem.

\subsection{Reduction to finite dimensional stability}
\label{reduction}
The supscript $G$ will stand for $G$-invariant tensors.
Define an infinitesimal complexified action of $\cK$ on $\cJ^G$ by 
$$ \begin{array}{cccc}
P_J : &C_0^{\infty}(M,\C)^G &\rightarrow &T_J\cJ ^G\\
 & h & \mapsto & \delb v_h\\
\end{array}$$
We say that  $J_0$ and $J_1$ lie in the same "$\cK^{\C}$-orbit" if there is a path $\phi_t\in C^{\infty}_0(M,\C)^G$ 
and a path $J_t$ in $\cJ^G$ joining $J_0$ and $J_1$ such that
$$
\frac{d}{dt}J_t=P_{J_t}(\phi_t).
$$
Together with the operator
$$
\delb: T_{J_0}\cJ^G \rightarrow \Om^{0,2}(T^{1,0})^G
$$
The elliptic complex
$$
C^{\infty}_0(M,\C)^G  \rightarrow T_{J_0}\cJ^G \rightarrow \Om^{0,2}(T^{1,0})^G
$$
defines a finite dimensional vector space
$$
\widetilde{H}^1_G=\lbrace \alpha \in T_{J_0}\cJ^G / P_{J_0}^*\alpha=0 , \delb\alpha =0 \rbrace
$$
which is the kernel of the elliptic operator $P_{J_0}P_{J_0}^*+(\delb^*\delb)^2$ on $T_{J_0}\cJ^G$.
If $G=0$, this space parametrizes infinitesimal complex deformations of $(M,J_0)$ that are compatible with $\om$,
up to exact symplectomorphisms. When $G$ is not trivial, we obtain infinitesimal deformations that preserve $G$.

The hypothesis $H_s\subset G$ is re-interpreted as
$$s_{J_0}^G=0.$$

Let $H_G$ be the normalizer of $G$ in $H$ and $K_G=H_G/G$. Then $K_G$, and its complexification $K_G^{\C}$, act on $\widetilde{H}^1_G$. Let $\h_G$ denote the Lie algebra of $H_G$.
Then following \cite{sz} and working $G$-invariantly
we can state:

\begin{proposition}
\label{slice}
 There exists an open neighborhood of the origin $B_G\subset
 \widetilde{H}^1_G$  and a $K_G$-equivariant  map
$\Phi^G: B_G \rightarrow \cJ^G$ such that the $\cK^{\C}$-orbit of every integrable
complex structure $J\in \cJ^G$ near $J_0$ intersects the image of
$\Phi^G$. If $x,x'$ are in the same $K_G^{\C}$-orbit and
$\Phi^G(x)$ is integrable 
then $\Phi^G(x)$ and $\Phi^G(x')$ are in the same $\cK^{\C}$-orbit.
Moreover, for all $x\in B_G$ we have $s^G(\Phi(x))\in \h_G/\g$.
\end{proposition}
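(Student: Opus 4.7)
The plan is to combine an equivariant Kuranishi-type construction with a gauge-fixing step that forces the moment map value to land in $\h_G/\g$. Since $G$ is compact, all Hodge theory for the elliptic complex $C^{\infty}_0(M,\C)^G \to T_{J_0}\cJ^G \to \Omega^{0,2}(T^{1,0})^G$ restricts cleanly to $G$-invariant sections, and the Hodge decomposition yields
\[
T_{J_0}\cJ^G = \widetilde{H}^1_G \oplus \mathrm{im}\,P_{J_0} \oplus \mathrm{im}\,\delb^*,
\]
with harmonic projection furnished by the Green operator of $P_{J_0}P_{J_0}^* + (\delb^*\delb)^2$. The whole picture is $K_G$-equivariant because $K_G$ commutes with $G$ and preserves $\omega$ and $J_0$.

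First I would build a preliminary slice $\Phi_0^G : B_G \to \cJ^G$ by the standard Kuranishi argument: for $x \in \widetilde{H}^1_G$ close to the origin, solve the nonlinear integrability equation by the implicit function theorem to obtain a unique small $\alpha(x)$ with $\alpha(x) - x$ in a chosen $L^2$-complement of $\mathrm{im}\,P_{J_0}$, and set $\Phi_0^G(x) = \exp_{J_0}(\alpha(x))$ using a $G$-equivariant connection on $\cJ^G$. The slice property for the $\cK^{\C}$-action near $J_0$ then follows exactly as in Sz\'ekelyhidi~\cite{sz}, since $\widetilde{H}^1_G$ is $L^2$-orthogonal to the tangent of the $\cK^{\C}$-orbit through $J_0$. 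Every step is canonical and $K_G$ acts linearly on $\widetilde{H}^1_G$, so the $K_G$-equivariance of $\Phi_0^G$ is automatic.

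To arrange the final condition $s^G(\Phi^G(x)) \in \h_G/\g$, I would further modify $\Phi_0^G(x)$ by a complexified Hamiltonian transformation. At $J_0$ the hypothesis $H_s \subset G$ gives $s^G_{J_0} = 0$, so for small $x$ the function $s^G(\Phi_0^G(x)) \in C_0^{\infty}(M)^G$ is small. Decomposing $C_0^{\infty}(M)^G = (\h_G/\g) \oplus (\h_G/\g)^{\perp}$, I would kill the second component by applying $\exp(P_{J_0}(i\psi))$ with $\psi \in (\h_G/\g)^{\perp}$ chosen via the implicit function theorem in the pair $(x,\psi)$. The required linearization at $(0,0)$ is, up to a constant, the composition of the Lichnerowicz operator of $(M,J_0,\omega)$ with projection onto $(\h_G/\g)^{\perp}$, and its kernel on $(\h_G/\g)^{\perp}$ is trivial because the Lichnerowicz kernel is precisely the space of Hamiltonian Killing potentials. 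Averaging over $K_G$ keeps the construction $K_G$-equivariant, and setting $\Phi^G(x)$ equal to the modified slice point yields all the claimed properties.

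The main obstacle is this last step: one must verify that the complexified gauge transformation cancelling the $(\h_G/\g)^{\perp}$-component preserves both $G$-invariance (so the output remains in $\cJ^G$) and membership in the $\cK^{\C}$-orbit of $\Phi_0^G(x)$ (so the slice property is not destroyed), and one must show the Lichnerowicz linearization is an isomorphism off $\h_G/\g$. All three points rest on the fact that $G \subset H$ acts holomorphically by isometries and that its Lie algebra contains the extremal vector field, which is exactly what the hypotheses provide.
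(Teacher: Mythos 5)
Your plan is essentially the paper's own argument: the paper gives no independent proof of this proposition, merely invoking Sz\'ekelyhidi's slice construction ``working $G$-invariantly'', and your equivariant Kuranishi slice followed by the implicit-function gauge-fixing of the moment-map value onto $\h_G/\g$ (with the Lichnerowicz operator as the invertible linearization off the Killing potentials) is exactly what that construction consists of. The only point you leave implicit is the second clause --- that $K_G^{\C}$-equivalence of $x,x'$ forces $\cK^{\C}$-equivalence of $\Phi^G(x),\Phi^G(x')$ --- but this too is part of Sz\'ekelyhidi's slice theorem and follows from the same machinery, so the proposal matches the intended proof.
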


A moment map for the $K_G$-action on $B_G$ with respect to the pulled back K\"ahler form by $\Phi^G$ is
$$
\mu^G(x)=s^G(\Phi^G(x)).
$$
As points in $\widetilde{H}^1_G$ in the same $K_G^{\C}$-orbit correspond via 
$\Phi^G$ to points in the same $\cK^{\C}$-orbit if they represent integrable complex structures,
the problem of finding zeros for the moment map $s^G$ is reduced to the problem
of finding zeros of $\mu^G$ in $B_G$.
Using the Kempf-Ness theorem on $\widetilde{H}^1_G$ with the linear 
symplectic form induced by $\mu^G$, we obtain from \cite{sz}:

\begin{proposition}
\label{linearstability}
 After possibly shrinking $B_G$, suppose that $x\in B_G$ is
polystable for the linear $K_G^{\C}$ action on $\widetilde{H}^1_G$.
Then there exists $x'\in B$ in the $K_G^{\C}$-orbit of $x$ such that
$\mu^G(x')=0$.
\end{proposition}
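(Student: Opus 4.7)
The plan is to combine the classical finite-dimensional Kempf--Ness theorem for the linear $K_G^{\C}$-action on $\widetilde{H}^1_G$ with an implicit function theorem argument, transferring zeros of the linear moment map into zeros of the nonlinear moment map $\mu^G$. Observe that $\widetilde{H}^1_G$ is a finite-dimensional complex vector space on which $K_G^{\C}$ acts linearly by a unitary representation with respect to the natural Hermitian structure inherited at the origin.

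First I would set up the linearised picture. Since $\Phi^G(0)=J_0$ and by hypothesis $s^G_{J_0}=0$, one has $\mu^G(0)=0$, and the pulled-back symplectic form $(\Phi^G)^{*}\omega_{\cJ^G}$ agrees with the flat linear K\"ahler form on $\widetilde{H}^1_G$ at the origin. It follows that $\mu^G$ differs from the standard quadratic moment map $\mu_0$ for the linear $K_G$-action on $\widetilde{H}^1_G$ only by terms vanishing to order at least two at the origin.

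Next, given $x\in B_G$ polystable for the linear $K_G^{\C}$-action, the classical Kempf--Ness theorem produces a point $y\in K_G^{\C}\cdot x$ at which $\mu_0$ vanishes, for instance the point of minimum Hermitian norm in the orbit, so that $y$ depends continuously on $x$. After shrinking $B_G$ we may assume that $y\in B_G$ and that $\mu^G(y)=O(|y|^{2})$ is arbitrarily small. Writing $F(\xi)=\mu^G(\exp(i\xi)\cdot y)$ for $\xi$ in a linear complement of the stabiliser of $y$ in $\mathfrak{k}_G$, the derivative $dF(0)$ is a small perturbation of the analogous derivative computed from $\mu_0$, and the latter is positive definite on this complement, its matrix being the Gram matrix of the infinitesimal action of $\mathfrak{k}_G$ at $y$. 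The implicit function theorem then furnishes a small $\xi$ with $F(\xi)=0$, giving $x'=\exp(i\xi)\cdot y$ in the orbit of $x$ with $\mu^G(x')=0$.

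The main obstacle lies in the uniform control required to choose $B_G$ once and for all: the linear Kempf--Ness representative $y$ must remain in $B_G$ for every polystable $x\in B_G$, and $dF(0)$ must be invertible with a uniform bound as $x$ varies. Both requirements reduce to the second-order vanishing of $\mu^G-\mu_0$ at the origin, together with the closedness of the $K_G^{\C}$-orbit through $x$ that polystability guarantees.
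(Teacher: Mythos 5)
Your strategy (classical Kempf--Ness for the linearised action to produce a zero $y$ of the \emph{linear} moment map $\mu_0$ on the orbit, followed by an implicit function theorem to correct $y$ to a zero of $\mu^G$) is not what the paper does: following Sz\'ekelyhidi \cite{sz}, the paper runs the Kempf--Ness argument directly for the nonlinear moment map $\mu^G$ restricted to the $K_G^{\C}$-orbit, using convexity of the associated Kempf--Ness functional along geodesics of $K_G^{\C}/K_G$ together with the properness supplied by polystability of the linearised action; a zero of $\mu^G$ is then obtained as the minimum of a convex proper functional, and no quantitative invertibility of any linearisation is needed. This is what the sentence ``relies on general properties for moment maps'' refers to.

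The genuine gap in your version is precisely the point you flag and then wave away: the uniform invertibility of $dF(0)$. That derivative is the Gram matrix of the infinitesimal action of the Lie algebra $\h_G/\g$ of $K_G$ at $y$, restricted to a complement of the stabiliser of $y$, and its smallest eigenvalue is \emph{not} bounded below by any function of $|y|$ alone: it degenerates as the orbit of $x$ approaches a lower stratum. Concretely, in the weight decomposition of $\widetilde{H}^1_G$ take $y$ with a component of size $\epsilon$ in the directions moved by one circle factor of $K_G$ and a component of size $\delta\ll\epsilon$ in the directions moved by another; then $dF(0)$ has an eigenvalue of order $\delta^2$, while the error $\mu^G(y)=(\mu^G-\mu_0)(y)$ is only $O(|y|^3)=O(\epsilon^3)$, so the Newton correction $\xi$ has size of order $\epsilon^3/\delta^2$, which is not small uniformly in $(\epsilon,\delta)$ and can push $\exp(i\xi)\cdot y$ outside $B_G$ or outside the region where your IFT estimates are valid. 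Closedness of the orbit and the vanishing of $\mu^G-\mu_0$ at the origin do not repair this. Two secondary points: you claim only second-order vanishing of $\mu^G-\mu_0$, but even in the nondegenerate case you need third-order vanishing (which does hold, because the two moment maps have the same $2$-jet at the fixed point $0$); with only $\mu^G(y)=O(|y|^2)$ the error and the derivative $dF(0)\sim|y|^2$ are of the same size and the IFT gives nothing. And the minimal-norm representative $y$ does not depend continuously on $x$ across orbit strata, though you only need $|y|\le|x|$ to keep $y\in B_G$, which is true. To close your argument you would have to stratify by stabiliser type and handle the degenerate directions separately -- or simply adopt the convexity/properness argument that the paper cites.
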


The proof of this proposition relies on general properties for moment maps and extends directly to the $G$-invariant context.
The following theorem is an application of Proposition \ref{linearstability} :
\begin{theorem}
Let $J_0\in \cJ^G$ be an integrable complex structure such that the
corresponding metric satisfies $s^G_{J_0}=0$. 
  Let $B_G\subset \tilde H^1_G$ and $\Phi^G:B_G\to \cJ^G$  be an
  adapted slice (cf. Proposition \ref{slice}) with $B_G$ a
  sufficiently small neighborhood of the origin.
Then for every  polystable orbit $\cO\subset  H^1_G$ relative
to the linearized  action of $K_G^\CC$ on $\tilde H^1_G$,
the intersection $\cO\cap B_G$ is either empty or contains a unique point $t$ such that the metric deduced from $J_t$
satisfies $s^G_{J_t}=0$. 
\end{theorem}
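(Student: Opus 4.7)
The plan is to reduce the infinite-dimensional problem of finding zeros of the scalar curvature moment map to the finite-dimensional GIT picture on $\tilde H^1_G$ supplied by the slice construction, and then directly apply the Kempf--Ness machinery. All of the ingredients are already stated in Propositions \ref{slice} and \ref{linearstability}; the theorem is essentially the bookkeeping that assembles them.

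\textbf{Existence of $t$.} Let $\cO\subset \tilde H^1_G$ be a polystable $K_G^{\C}$-orbit with $\cO\cap B_G\neq\emptyset$, and choose any $x\in \cO\cap B_G$. Since $x$ is polystable for the linearized $K_G^{\C}$-action on $\tilde H^1_G$, Proposition \ref{linearstability} (which is the Kempf--Ness theorem applied to the linear symplectic $K_G$-action on $\tilde H^1_G$, after shrinking $B_G$ if necessary) produces a point $x'$ in the $K_G^{\C}$-orbit of $x$ with $\mu^G(x')=0$. Since $\mu^G = s^G\circ\Phi^G$ by construction, the almost K\"ahler structure $J_{x'}:=\Phi^G(x')$ satisfies $s^G_{J_{x'}}=0$. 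Setting $t=x'\in \cO\cap B_G$ gives the desired element. If moreover the orbit $\cO$ contains an integrable complex structure (for instance because $x$ itself is such), then the last clause of Proposition \ref{slice} ensures that $\Phi^G(x)$ and $\Phi^G(x')$ lie in the same $\cK^{\C}$-orbit, so that integrability is preserved along the deformation produced by Kempf--Ness and the resulting $t$ does correspond to a genuine K\"ahler deformation.

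\textbf{Uniqueness of $t$.} Suppose $t,t'\in \cO\cap B_G$ both satisfy $\mu^G(t)=\mu^G(t')=0$. Because $\cO$ is a polystable orbit for the linear $K_G^{\C}$-action, the classical Kempf--Ness theorem asserts that the zero set of the moment map $\mu^G$ within $\cO$ is a single $K_G$-orbit. Hence $t$ and $t'$ are related by $K_G$, so the statement of uniqueness is to be read modulo the compact group $K_G$ (which acts by genuine isometries of the K\"ahler structure and therefore does not change the metric up to isomorphism). This is the standard interpretation of uniqueness in the Kempf--Ness framework, and is what the proof yields.

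\textbf{Main obstacle.} The only delicate point is the control of the size of the slice. Proposition \ref{linearstability} requires $B_G$ to be small enough that a Kempf--Ness type argument can be performed on the full orbit $\cO\cap B_G$, and Proposition \ref{slice} requires $B_G$ to be small enough that the slice map $\Phi^G$ is well behaved and equivariantly universal. One must therefore check that both shrinkings are compatible; this is essentially automatic since each condition is an openness statement around $0$, but it is what forces the qualifier ``sufficiently small neighborhood of the origin''. Once this is granted, no further analysis is needed: the theorem is the direct concatenation of Proposition \ref{slice} (which identifies $\mu^G$-zeros with $s^G$-zeros of $\Phi^G(x)$) and Proposition \ref{linearstability} (which produces $\mu^G$-zeros inside every polystable $K_G^{\C}$-orbit intersecting $B_G$).
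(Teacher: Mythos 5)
Your proposal is correct and follows exactly the route the paper intends: the paper gives no written proof, stating only that the theorem is "an application of Proposition \ref{linearstability}", and your argument is precisely that application, using $\mu^G=s^G\circ\Phi^G$ together with the slice properties of Proposition \ref{slice}. Your honest caveat that "unique" must be read modulo the $K_G$-action (since Kempf--Ness identifies the zero set of $\mu^G$ in a polystable orbit with a single $K_G$-orbit) is a fair and correct reading of the statement.
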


\subsection{The polarized case}
\label{sec:polarized}

Let $(X,L)$ be a polarized extremal K\"ahler manifold and $(X',L')$ a small complex deformation of $(X,L)$.
Then Sz\'ekelyhidi has shown in the CSC case that the K-polystability of $(X',L')$ implies
the stability of the corresponding infinitesimal deformation.
This results holds in the extremal case.

\begin{theorem}
\label{extremaldefo}
Let $(X,L)$ be a polarized extremal K\"ahler 
manifold with extremal metric $\om$.

Let $G$ be a torus of $H$ the group of Hamiltonian isometries of $(X,\om)$ such that 
its Lie algebra contains the extremal vector field
and $\cL \rightarrow \cX\rightarrow \mathcal{B}$ a $G$-invariant polarized deformation of $(X,L)$.

Then, shrinking $\mathcal{B}$ if necessary,
if $(\cX_t, \cL_t)$ is $K$-polystable relative to $G$ then $(\cX_t,\cL_t)$ admits an extremal metric.
If we assume that  for each $t\neq 0$, $G$ is a maximal torus in $Aut(\cX_t)$, then this condition is also necessary.

\end{theorem}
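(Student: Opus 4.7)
The plan is to reduce to the finite-dimensional stability problem on the slice $\widetilde H^1_G$ studied in Section~\ref{reduction}, then invoke Proposition~\ref{linearstability}. First, because $\cX\to\mathcal B$ is $G$-invariant, after shrinking $\mathcal B$ the semi-universal property of the Kuranishi slice $\Phi^G: B_G \to \cJ^G$ (cf. Proposition~\ref{slice}) furnishes a holomorphic map $\tau:\mathcal B\to B_G$ with $\tau(0)=0$ such that, for each $t$, the complex manifold $\cX_t$ is $G$-equivariantly biholomorphic to the one defined by $\Phi^G(\tau(t))$. By the last assertion of Proposition~\ref{slice}, any zero of $\mu^G=s^G\circ\Phi^G$ in the $K_G^\CC$-orbit of $\tau(t)$ thus yields an $\omega$-compatible integrable complex structure in the $\cK^\CC$-orbit of $\Phi^G(\tau(t))$, equivalently an extremal K\"ahler metric on $\cX_t$ representing $c_1(\cL_t)$.

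The main step is then to show that relative $K$-polystability of $(\cX_t,\cL_t)$ implies Hilbert-Mumford polystability of $\tau(t)$ under the linearized $K_G^\CC$-action on $\widetilde H^1_G$. Given a destabilizing one-parameter subgroup $\lambda:\CC^*\to K_G^\CC$, I would consider the limit $y_0=\lim_{z\to 0}\lambda(z)\cdot\tau(t)$ in $\widetilde H^1_G$. Applying $\Phi^G$ produces a $G$-equivariant smooth family of integrable complex structures in $\cJ^G$, which after algebraization via sections of $\cL_t^{\otimes k}$ for $k\gg 0$ yields a $G$-equivariant test configuration for $(\cX_t,\cL_t)$. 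The Donaldson-Fujiki interpretation of $s^G$ as a moment map on $\cJ^G$, combined with the Kempf-Ness/Mumford identification of the asymptotic slope of the (relative) Mabuchi functional with the Hilbert-Mumford weight at $y_0$, identifies the relative Futaki invariant of this test configuration with that weight up to a positive constant. Relative $K$-polystability with respect to $G$ thus forces the weight to be positive unless $\lambda$ fixes $\tau(t)$, which is exactly Hilbert-Mumford polystability of $\tau(t)$ on $\widetilde H^1_G$. This is an equivariant extension of Sz\'ekelyhidi's argument in~\cite{sz}, with the extremal vector field absorbed into $\g$.

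Applying Proposition~\ref{linearstability} now provides $t'\in B_G$ in the $K_G^\CC$-orbit of $\tau(t)$ with $\mu^G(t')=0$, which by the first paragraph produces the desired extremal metric on $\cX_t$ in the class $c_1(\cL_t)$. For the converse under the hypothesis that $G$ is maximal in $\Aut(\cX_t)$ for $t\neq 0$, if $\cX_t$ admits an extremal metric in $c_1(\cL_t)$ then the theorem of Stoppa-Sz\'ekelyhidi~\cite{ss09} asserts that $(\cX_t,\cL_t)$ is $K$-polystable relative to any maximal torus in $\Aut(\cX_t)$; by maximality this torus is $G$, whence the claimed necessity.

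The main obstacle is the algebro-geometric construction of an honest $G$-equivariant test configuration from an abstract destabilizing one-parameter subgroup in the Kuranishi slice, together with the matching of its relative Futaki invariant with the Hilbert-Mumford weight. This demands embedding $\cX_t$ into projective space via $\cL_t^{\otimes k}$, extending $\lambda$ to the ambient space compatibly with $G$, verifying that the flat limit coincides with the complex manifold defined by $\Phi^G(y_0)$, and deriving the slope identification for the reduced scalar curvature $s^G$ rather than the full scalar curvature. These steps follow Sz\'ekelyhidi's CSC argument but require careful bookkeeping to accommodate the extremal vector field sitting inside $\g$, which is the principal technical content of the extremal version.
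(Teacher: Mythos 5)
Your proposal is correct and follows essentially the same route as the paper, whose own proof of Theorem~\ref{extremaldefo} consists only of citing \cite{sz} for the forward implication and \cite{ss09} for the converse under the maximal-torus hypothesis. Your outline (reduction to the Kuranishi slice via Proposition~\ref{slice}, the implication from relative $K$-polystability to Hilbert--Mumford polystability of $\tau(t)$ via destabilizing one-parameter subgroups and test configurations, then Proposition~\ref{linearstability}, and Stoppa--Sz\'ekelyhidi for necessity) is a faithful reconstruction of the argument those citations encapsulate, and you correctly identify the test-configuration/weight-matching step as the real technical content being deferred to the references.
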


\begin{proof}
The first part of the theorem follows \cite{sz}.
If $\cX_t$ admits an extremal metric, by Sz\'ekelyhidi and Stoppa \cite{ss09} it must be K-polystable with respect to a maximal torus of automorphisms.
\end{proof}

\subsection{Semiuniversal deformations and the slice}
\subsubsection{Equivariant deformations}
By Kodaira-Spencer theorem, if $H^2(X,\tsheaf_ X)=0$ there exists a
semiuniversal family of deformations $X\hookrightarrow \cX \rightarrow
B$ such that $B$ is an open neighborhood of the origin in 
$H^1(X,\tsheaf_ X)$, and the induced Kodaira-Spencer map 
$  T_0B\simeq
H^1(X,\tsheaf_{X}) \to H^1(X,\tsheaf_{X}) $ is the identity.

\begin{definition}
 Let $X\hookrightarrow \cX \rightarrow B$ be a deformation
 of $X$ and $H$ a compact connected Lie group in $\Aut(X)$ acting
 holomorphically on $\cX$ and satisfying the following properties
\begin{itemize}
\item $H$ acts in a fiber preserving manner on $\cX$, i.e. such that the action of $H$ descends to
$B$
 \item   $\cX_0$ is invariant under the
$H$-action, so that there is a morphism $H\to \Aut(X)$. 
\item the above morphism is the canonical inclusion $H\subset\Aut(X)$.
\end{itemize}
Such a deformation shall be called  a \textit{H-equivariant
  deformation} of $X$.
If a Lie subgroup $G$ in $H$ induces a trivial action on $B$, we say
that the $H$-equivariant deformation is $G$-invariant. If $H=G$ we
simply say that \emph{the deformation is $G$-invariant}.
\end{definition}

An immediate generalization of Kodaira-Spencer theory is given by the
following lemma:
\begin{lemma}
\label{lemma:ksH}
Let $X$ be a closed complex manifold satisfying  $H^2(X,\tsheaf_
X)=0$ and let $H$ be a compact connected Lie group in $\Aut(X)$.

Then there exists a semiuniversal family of complex deformations (in
the usual sense)
 $X\hookrightarrow \cX \rightarrow B$ which is $H$-equivariant.
Moreover, we may assume that $B$ is an open neighborhood of the origin
in $H^1(X,\tsheaf_
X)$ such that
\begin{enumerate}
\item  the induced Kodaira-Spencer map is the identity
\item the $H$-action on $B$ agrees with the canonical action of $H$
  on $H^1(X,\tsheaf_
X)$.
\end{enumerate}
In addition, the family of deformation is versal among
$H$-equivariant deformations. By this, we mean that any other
$H$-equivariant deformation $X\hookrightarrow \cX'\to B'$ is induced
by a $H$-equivariant holomorphic map $B'\to B$. 
\end{lemma}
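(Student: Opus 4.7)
The plan is to combine the classical Kodaira–Spencer–Kuranishi construction with an averaging argument over the compact group $H$.

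First, I would fix an $H$-invariant Hermitian metric on $X$ and an $H$-invariant Hermitian metric on the holomorphic tangent bundle, both obtained by averaging arbitrary choices with respect to the Haar measure of the compact group $H$. The induced $\delb$-Laplacian on $\Omega^{0,q}(X,\tsheaf_{X})$, its Green's operator $G$ and the harmonic projector onto $H^{q}(X,\tsheaf_{X})$ are then all $H$-equivariant. In particular the $H$-action on $H^{1}(X,\tsheaf_{X})$ induced by pullback of Dolbeault cocycles coincides with the canonical linear $H$-action.

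Next, I would run the usual Kuranishi iteration to produce a convergent power series solution
\[
\varphi(t) = t + \tfrac{1}{2}\,G\,\delb^{*}[\varphi(t),\varphi(t)]
\]
of the Maurer–Cartan equation $\delb\varphi + \tfrac{1}{2}[\varphi,\varphi]=0$, where $t$ ranges in a neighborhood $B$ of the origin in $H^{1}(X,\tsheaf_{X})$. The vanishing of $H^{2}(X,\tsheaf_{X})$ kills the obstruction, so that $B$ is smooth of the expected dimension. Since every operator appearing in the iteration is $H$-equivariant, the map $t\mapsto\varphi(t)$ is itself $H$-equivariant. The resulting deformed complex structures assemble into a holomorphic family $\cX\to B$ which inherits a holomorphic $H$-action lifting the linear $H$-action on $B\subset H^{1}(X,\tsheaf_{X})$ and restricting to the prescribed action on $X=\cX_{0}$. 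This gives items (1), (2), (3) and exhibits $\cX\to B$ as an $H$-equivariant deformation.

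For the equivariant versality in (4), let $X\hookrightarrow\cX'\to B'$ be any other $H$-equivariant deformation. The classical versality of the Kuranishi family produces a holomorphic map $f:B'\to B$ with $f(0)=0$ and an isomorphism $\cX'\simeq f^{*}\cX$ of deformations of $X$. Since the $H$-action on the target $H^{1}(X,\tsheaf_{X})$ is linear and holomorphic, one can form the Haar average $\bar f:B'\to B$, which is automatically $H$-equivariant and still holomorphic. The main obstacle of the proof is showing that $\bar f$ still classifies $\cX'$ up to isomorphism of deformations: one has to check order by order that the jets of $\bar f - f$ can be compensated by an $H$-equivariant automorphism of the Kuranishi family, and this is where the vanishing $H^{2}(X,\tsheaf_{X})=0$ is invoked a second time, to annihilate the successive cohomological obstructions. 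Convergence of the corrections is controlled by the same elliptic estimates used in the non-equivariant Kuranishi theory.
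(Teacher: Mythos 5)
Your construction of the equivariant Kuranishi family is exactly the route the paper takes: its entire proof is the one-line remark that one runs Kuranishi's construction with $H$-invariant metrics, and your averaging of the Hermitian metric, the resulting equivariance of the Green's operator and harmonic projector, and the $H$-equivariance of the solution $\varphi(t)$ of the Maurer--Cartan equation are precisely what that remark is shorthand for. Where you genuinely diverge is in the versality step: you first invoke classical (non-equivariant) versality to get some classifying map $f$ and then try to repair its Haar average $\bar f$. This is more delicate than necessary, and your appeal to $H^2(X,\tsheaf_X)=0$ there is not the right cohomological input: the non-uniqueness of the classifying map is governed by $H^0(X,\tsheaf_X)$ (which is large in the toric setting of the paper, so the set of classifying maps is far from a point and not obviously stable under averaging), and it is not clear that the jets of $\bar f-f$ can always be absorbed by an automorphism of the Kuranishi family. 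The cleaner argument, and the one implicit in "working $H$-invariantly," is to note that Kuranishi's construction of the classifying map is itself an iteration built from the Green's operator and harmonic projector of the fixed metric; applied to an $H$-equivariant family $\cX'\to B'$ with all operators $H$-equivariant, it outputs an $H$-equivariant $f$ directly, with no a posteriori averaging or correction needed.
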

\begin{proof}
  The proof is obtained using Kuranishi's approach \cite{Ku}, working
  with $H$-invariant metrics.
\end{proof}

\begin{definition}
\label{def:semiheq}
Given a closed complex manifold $X$ satisfying  $H^2(X,\tsheaf_
X)=0$ and $H$ be a compact connected Lie group in $\Aut(X)$, the
family satisfying the properties (1)-(2) of Lemma \ref{lemma:ksH} shall be
simply referred to as a \emph{$H$-equivariant semiuniversal family of
deformations} of $X$.  If $X$ is toric and $H$ is the real torus, the
family will be called instead a \emph{semiuniversal toric family of deformations}.
\end{definition}

\subsubsection{Properties of the  slice}
In favorable cases the tangent space $\widetilde{H}^1$ to the slice
agrees with the space of \emph{every} infinitesimal complex
deformations
 $H^1(X, TX)$.
This space is identified with 
$$
\lbrace \alpha\in \Om^{0,1}(T^{1,0}(X)), \delb\alpha=0, \delb^*\alpha = 0 \rbrace .
$$

\begin{lemma}
 \label{gcase}
If we assume $X=(M,J_0)$ simply connected, $H^2(X,\mathcal{O})=0$ and
$H^2(X,\tsheaf_ X)=0$, then $\widetilde{H}^1\simeq H^1(X,\tsheaf_ X)$. 
If $\om$ is a K\"ahler metric on $X$ and $H$ the group of Hamiltonian isometries of $(X,\om)$,
this isomorphism can be chosen $H$-equivariant.
\end{lemma}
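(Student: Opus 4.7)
The plan is to identify $T_{J_0}\cJ$ with the subspace of tensors $\alpha\in\Om^{0,1}(T^{1,0}X)$ that are $\om$-symmetric, in the sense that the composition $\om\circ\alpha\in(\Om^{0,1})^{\otimes 2}$ lies in $S^2\Om^{0,1}$, and then to show that the natural map sending $\alpha\in\widetilde{H}^1$ to its Dolbeault class in $H^1(X,\tsheaf_X)$ is a bijection. The two inputs I will use are: (a) on a K\"ahler manifold $\om$ is parallel, so the bundle homomorphism $\Om^{0,1}(T^{1,0})\to\Om^{0,2}$ given by contraction with $\om$ followed by antisymmetrization commutes with $\delb$ and $\delb^*$; and (b) the vanishings $H^{0,2}(X)\simeq H^2(X,\cO)=0$ and $H^{0,1}(X)=0$, the latter being a consequence of $H^1(X,\CC)=0$ (which follows from simple connectedness) together with the Hodge decomposition.

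For surjectivity, I would take a $\delb$-harmonic representative $\alpha$ of a class in $H^1(X,\tsheaf_X)$, so that $\delb\alpha=\delb^*\alpha=0$, and let $\beta\in\Om^{0,2}$ be the image of $\alpha$ under the $\om$-antisymmetrization. By (a), $\beta$ is itself $\delb$-harmonic in $\Om^{0,2}$, and by (b) it must therefore vanish, which means $\alpha\in T_{J_0}\cJ$. The condition $P_{J_0}^*\alpha=0$ is then automatic from $\delb^*\alpha=0$: for every $h\in C^\infty_0(M,\CC)$ one has $v_h\in\Gamma(T^{1,0})$ and hence $\langle\alpha,P_{J_0}(h)\rangle=\langle\alpha,\delb v_h\rangle=\langle\delb^*\alpha,v_h\rangle=0$.

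For injectivity, assume $\alpha\in\widetilde{H}^1$ is $\delb$-exact, $\alpha=\delb\xi$ with $\xi\in\Gamma(T^{1,0})$. A direct K\"ahler computation (using $\delb g=0$) shows that $\delb\xi$ is $\om$-symmetric if and only if the $(0,1)$-form $\iota_\xi\om$ is $\delb$-closed; by (b) it is then $\delb$-exact, $\iota_\xi\om=\delb h$, and this is precisely the defining equation $\xi=v_h$ of the Hamiltonian lift of $h$. Hence $\alpha=\delb v_h=P_{J_0}(h)$ lies in the image of $P_{J_0}$, and being simultaneously orthogonal to this image forces $\alpha=0$. Finally, the $H$-equivariance of the resulting isomorphism is automatic: the Hodge projection for the $H$-invariant metric $g$, the algebraic $\om$-symmetric/antisymmetric splitting, and the correspondence $h\mapsto v_h$ are all canonical, hence intertwine the $H$-actions. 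The delicate point that really needs to be checked is the commutation claim (a) between the $\om$-contraction and the Dolbeault Laplacian; this is exactly where the K\"ahler hypothesis on $\om$ enters the argument.
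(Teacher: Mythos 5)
Your argument is correct, but it is a genuinely different proof from the one in the paper. The paper's proof is nonlinear and deformation-theoretic: it uses $H^2(X,\tsheaf_X)=0$ to produce, for each $\xi\in H^1(X,\tsheaf_X)$, an actual semiuniversal family realizing $\xi$, invokes Kodaira--Spencer stability of K\"ahlerness (via $H^2(X,\cO)=0$) to keep the fibres K\"ahler in the fixed class $[\om]$, applies Moser's trick to make the symplectic form constant along the family so that the infinitesimal deformation becomes $\om$-compatible, and finally uses semiuniversality to map back and identify $\xi$ with an element of $\widetilde{H}^1$. You instead work entirely at the linear, Hodge-theoretic level: the contraction-and-antisymmetrization map $\Phi:\Om^{0,1}(T^{1,0})\to\Om^{0,2}$ isolates the obstruction to $\om$-compatibility, $H^{0,2}=0$ kills it on harmonic representatives (surjectivity), and $H^{0,1}=0$ together with $P^*\alpha=0$ gives injectivity. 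The "delicate point" you flag does check out: in local coordinates $\Phi(\delb\xi)-\delb(\iota_\xi\om)$ is controlled by $\partial_{\bar l}g_{j\bar k}-\partial_{\bar k}g_{j\bar l}$, which vanishes exactly by the K\"ahler condition, and since $\Phi$ is a parallel bundle map the commutation with $\delb^*$ (hence with the Dolbeault Laplacian) follows as well. Your route is more elementary and better matched to the definition of $\widetilde{H}^1$ as the kernel of an elliptic operator; notably it does not use $H^2(X,\tsheaf_X)=0$ at all, so it proves the isomorphism in slightly greater generality (that hypothesis is only needed later, to guarantee that points of the slice correspond to integrable structures). What the paper's approach buys in exchange is the geometric realization of each class by an honest family of $\om$-compatible complex structures, which is the form in which the lemma is actually consumed by the slice construction, and it makes the $H$-equivariance statement follow from equivariance of the Kuranishi family rather than from canonicity of the linear operators -- though your canonicity argument for equivariance is equally valid.
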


\begin{proof}
We suppose that $X$ is simply connected. In that case, if $v$ satisfies
$$
\cL_v\om=0
$$
then there is $f\in C^{\infty}_0(M)$ such that ${v}={v}_f$.
Thus the equation $P^*(\alpha)=0$
is equivalent to $\delb^* \alpha =0$ for all $\alpha\in T_{J_0}\cJ$
and we see that $\widetilde{H}^1$ is the subspace of $H^1(X,\tsheaf_ X)$
consisting of elements $\alpha$ such that 
$$
\om(\alpha (v),u)+\om(v,\alpha(u))=0.
$$
The space $\widetilde{H}^1$ characterizes integrable infinitesimal
deformations of $J_0$ that are compatible with $\om$ up to symplectomorphisms.
Let $\xi\in H^1(X,\tsheaf_ X)$.
If $H^2(X,\tsheaf_ X)=0$ the deformation theory is unobstructed and
there exists a semi-universal family of deformations $X\hookrightarrow \cX \rightarrow B$ 
such that the image by the Kodaira-Spencer map of $1\in T_0 B$ is $\xi$. Then, by $H^2(X,\mathcal{O})=0$, we
know from Kodaira and Spencer theory that we can suppose the $\cX_t=(M,J_t)$ to be K\"ahler,
with the same cohomology class $[\om]$. Using Moser's trick, we get a new family 
$X\hookrightarrow \cX' \rightarrow B$ of deformations of $X$ such that $\cX'_t=(M,J_t',\om)$ is K\"ahler, that
is $J_t'$ is $\om$-compatible. The associated infinitesimal deformations corresponds to elements in $\widetilde{H}^1$.
By semi-universality of $\cX$, we have maps
$$
\begin{array}{ccc}
 \cX' & \longrightarrow & \cX \\
 \downarrow & & \downarrow \\
 B & \longrightarrow & B 
\end{array}
$$
and $\xi$ corresponds to an element of $\widetilde{H}^1$
via the tangent map $T_0B \rightarrow T_0B$, which proves that $\widetilde{H}^1\simeq H^1(X,\tsheaf_ X)$.
Note that these families of deformations can be chosen $H$-equivariant so that 
$T_0B \rightarrow T_0B$ is $H$-equivariant, with $H$ the group of Hamiltonian isometries of $X$.
This approach  has been used in \cite{bro}.
\end{proof}

\section{Deformations of extremal toric manifolds}
\label{toric}

It is possible to describe the deformation theory 
for a smooth compact extremal toric manifold in terms of combinatorial datum from its fan.
In particular, in the case of surfaces, we obtain a simple criterion giving a necessary and sufficient condition for
an extremal K\"ahler toric surface to admit deformations with extremal metrics. For basics on toric manifolds we refer to \cite{oda}.

\subsection{Action of the torus on the space of  infinitesimal deformations}

Let $X=TV(\Sigma)$ be a toric manifold of dimension $n$,
with $\Sigma$ a fan in a lattice $N$. We suppose $X$ compact and smooth.
In that case $X$ is simply connected and satisfies $H^2(X,\cO)=0$ \cite{oda}.
Endow $X$ with a toric metric $\om$ and let $H$ be the group of Hamiltonian isometries of $(X,\om)$.
Then if $H^2(X,\tsheaf_ X)=0$ we are interested in the action of $H$ on $H^1(X,\tsheaf_ X)$.
A result of Demazure \cite{dem} describes the group of automorphisms of toric varieties. In particular,
this group contains the torus $\TT^{\C}\simeq N\otimes_{\C} \C^*$ as a maximal torus.
We will restrict ourselves to the study of the action of $\TT\subset H$ on the space of infinitesimal deformations. 
In \cite{ilv}, Ilten and Vollmert gave a simple
description for generators of the vector space $H^1(X,\tsheaf_ X)$.

Let $N^*$ denote the dual of the lattice $N$. Then $H^1(X,\tsheaf_ X)$ is a $N^*$-graded algebra and admits a weight decomposition
$$
H^1(X,\tsheaf_ X)=\bigoplus_{R\in N^*} H^1(X,\tsheaf_ X)(R),
$$
where $H^1(X,\tsheaf_ X)(R)$ is the subspace of $H^1(X,\tsheaf_ X)$ on which the
$\TT^{\C}$-action is of weight $R$.
Let $\Sigma^{(1)}$ be the set of rays in $\Sigma$. To simplify notations, we will identify the rays of $\Sigma^{(1)}$ with primitive generators
of these rays.
Let $R\in N^*$ and $\rho\in \Sigma^{(1)}$ such that $\langle R, \rho \rangle =1$. 
Let $\Gamma_{\rho}(-R)$ be the graph embedded in $N_{\Q}$ with vertices consisting in primitive lattice generators
of rays $$\tau \in \Sigma^{(1)}\setminus \lbrace \rho \rbrace$$ such that $\langle \tau, R \rangle >0$. Two vertices are connected by an edge if they generate a cone in $\Sigma$. Now we let
$$
\Om(-R)=\lbrace \rho \in \Sigma^{(1)} / \langle \rho, R \rangle=1, \Gamma_{\rho}(-R) \neq \varnothing \rbrace.
$$
The relevant fact is that for each connected component $C$ of $\Gamma_{\rho}(-R)$,
Ilten and Vollmert constructed an element $\pi(C,\rho,R)$ of $H^1(X,\tsheaf_ X)(-R)$. Moreover, they
proved that these elements span $H^1(X,\tsheaf_ X)(-R)$ for $\rho\in \Omega(-R)$ and $C$ ranges over all connected
components of $\Gamma_{\rho}(-R)$. 
Then we can compute the action of the torus $T^{\C}$ on $H^1(X,\tsheaf_ X)$:

\begin{lemma}
\label{action}
Each space $H^1(X,\tsheaf_ X)(R)$ is fixed under the torus action on $H^1(X,\tsheaf_ X)$.
Moreover, the action of $(\lambda_1,...,\lambda_n)\in \TT^{\C}\simeq (\C^*)^n$ on $H^1(X,\tsheaf_ X)(R)$
is given by 
$$
\forall x \in H^1(X,\tsheaf_ X)(R),\; (\lambda_1,...,\lambda_n).x=\lambda_1^{\langle R, e_1\rangle}..\lambda_n^{\langle R, e_n\rangle}x
$$
with $(e_i)$ a $\Z$-basis for $N$.
\end{lemma}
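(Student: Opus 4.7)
The plan is to derive both assertions as formal consequences of representation theory of the algebraic torus, together with a check that the grading appearing above coincides with the weight decomposition of $H^1(X,\tsheaf_X)$ as a $\TT^\CC$-module.

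First I would recall that the holomorphic action of $\TT^\CC$ on $X$ lifts canonically to the tangent sheaf $\tsheaf_X$, since the action is by biholomorphisms; this induces a linear holomorphic representation of $\TT^\CC$ on the finite-dimensional vector space $H^1(X,\tsheaf_X)$. Because $\TT^\CC\simeq N\otimes_\ZZ\CC^*$ is an algebraic torus, every such representation is completely reducible and decomposes into one-dimensional eigenspaces indexed by its character lattice. That character lattice is canonically $N^*$: a $\ZZ$-basis $(e_i)$ of $N$ yields an isomorphism $\TT^\CC\simeq(\CC^*)^n$ under which $R\in N^*$ defines the character $(\lambda_1,\dots,\lambda_n)\mapsto \lambda_1^{\langle R,e_1\rangle}\cdots\lambda_n^{\langle R,e_n\rangle}$.

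Second, I would point out that the grading $H^1(X,\tsheaf_X)=\bigoplus_{R\in N^*}H^1(X,\tsheaf_X)(R)$ introduced just above the statement is \emph{defined} as the weight space decomposition of this $\TT^\CC$-representation. With this identification in hand, both conclusions of the lemma are immediate: each subspace $H^1(X,\tsheaf_X)(R)$ is torus-invariant by construction, and the action on it is by the character computed in the first step.

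The only step with any content is checking that the decomposition written above, which the paper inherits from the Ilten--Vollmert combinatorial description, really is the weight decomposition for the natural torus action on $H^1(X,\tsheaf_X)$, rather than merely an abstractly isomorphic grading. This is verified by inspecting the explicit \v{C}ech representatives $\pi(C,\rho,R)$: they are built from rational functions on $\TT^\CC$-invariant affine charts whose $\TT^\CC$-weight is $-R$, so the resulting class is homogeneous of weight $-R$ and therefore lands in the $(-R)$-isotypic component, consistent with its placement in $H^1(X,\tsheaf_X)(-R)$. This is the only potential pitfall in the proof, and it is purely bookkeeping: there is no geometric or analytic obstacle, only the need to keep sign conventions straight between T-variety theory and the character lattice of $\TT^\CC$.
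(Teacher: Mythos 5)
Your proposal is correct and follows essentially the same route as the paper: the paper's proof likewise reduces everything to inspecting the explicit Ilten--Vollmert representatives, noting that each $\pi(C,\rho,R)$ is a cocycle of derivations proportional to $\del(R,\rho):\chi^v\mapsto\langle\rho,v\rangle\chi^{v+R}$, computing that the torus acts on such a derivation by the character $\lambda_1^{\langle R,e_1\rangle}\cdots\lambda_n^{\langle R,e_n\rangle}$, and concluding via the spanning statement (Theorem 6.5 of Ilten--Vollmert). The step you isolate as the only one with content --- matching the combinatorial grading with the geometric weight decomposition, sign conventions included --- is precisely the computation the paper carries out.
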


\begin{proof}
From theorem 6.2. \cite{ilv}, for each $\rho \in \Omega(R)$ and each $C$ a connected component of $\Gamma_{\rho}(-R)$,
the element $\pi(C,\rho,R)$ is given as a cocycle by derivations defined on intersections of an open cover of $X$.
Each of these derivation is proportional to the derivation $\del (R,\rho)$ that takes
$$
\chi^v \mapsto \langle \rho, v \rangle \chi^{v+R}
$$
for $v\in N^*$ and where $\chi^{e_i^*}$ denotes the usual regular functions on the torus $Spec(\C[N^*])$.
Then the action of the torus on these derivations is
$$
\forall (\lambda_1,...,\lambda_n)\in \TT^{\C},
$$
$$
(\lambda_1,...,\lambda_n).\del(R,\rho)=\Pi_{i}\lambda_i^{\langle R, e_i\rangle} \del(R,\rho).
$$
To conclude, from theorem 6.5. of \cite{ilv}, the elements $\pi(C,\rho,R)$ span $H^1(X,\tsheaf_ X)(R)$.
\end{proof}

Now we can investigate polystable points under the action of the torus.

\subsection{Stability criteria}                                                                                    
\label{stable}
Let $N_{def}^*(\Sigma)$ be the subset of elements in $N^*$ satisfying
$$
\exists \rho\in \Sigma^{(1)}\;  /\; dim(H^0(\Gamma_{\rho}(R),\C)) \geq 2.
$$
Then from \cite{il} the weight decomposition under the torus action of $H^1(X,\tsheaf_ X)$ is:
$$
H^1(X,\tsheaf_ X)=\bigoplus_{R\in N^*_{def}(\Sigma)} H^1(X,\tsheaf_ X)(R).
$$
Recall that if $V=Spec(A)$ is an affine variety endowed with an algebraic action of a reductive group $G$, 
we form the GIT quotient $V/G=Spec(A^G)$ where $A^G$ is the ring of invariants.
Then the set of semi-stable points $V^{ss}$ is given by:
$$
V^{ss}=\lbrace x\in V / \exists P\in A^G/ P(x)\neq 0 \rbrace
$$
and the set of polystable points $V^p$ is the subset of points $x\in V^{ss}$ such that the orbit $G.x$
is closed in $V^{ss}$.

\begin{remark}
As our problem is settled in a linear context, the constant polynomials are invariant and each point is semi-stable.
However, if the only invariant polynomials that do not vanish on a point $x$ are the constants, then the ring of invariant polynomials makes
no difference between $x$ and $0$, thus this point is not polystable. We will first compute semi-stable points
that are detected by a non-constant polynomial and refer to such points as semi-stable points.
\end{remark}

In our situation, $G=\TT^{\C}$ and we consider $V=H^1(X,\tsheaf_ X)$.
Let denote $R_1,..,R_s$ the elements of $N^*_{def}(\Sigma)$ and $d_i$ the dimension of $H^1(X,\tsheaf_ X)(R_i)$.
Let $(E_{i,k})_{k=1..d_i}$ be a basis of $H^1(X,\tsheaf_ X)(R_i)$ such that $V=Spec(\C[X_{i,k}])$.

\begin{definition}
 We say that a family $\lbrace R_{1},..,R_{r})\in N^*$ is \textit{balanced}
if there exists 
$(a_i)\in \N^r, a_i\neq 0$ satisfying $ \sum_i a_iR_i=0$.
\end{definition}

For each balanced family $\textbf{R}=\lbrace R_k \rbrace\in N^*_{def}(\Sigma)$ , we set
$$
U_{\textbf{R}}=\lbrace x=x_1+..+x_s, x_i \in H^1(X,\tsheaf_ X)(R_i) / x_k\neq 0 \text{ for } R_k\in\textbf{R} \rbrace.
$$
Let $\nu(\Sigma)$ be the set of $I\subset \lbrace 1,..,s\rbrace$ such that $\lbrace R_i, i\in I \rbrace\in N^*_{def}(\Sigma)$ is a balanced family.
Then the set of semi-stable points $H^1(X,\tsheaf_ X)^{ss}$ is given by the following:
 
\begin{proposition}
 \label{ss}
There exist semi-stable points in $H^1(X,\tsheaf_ X)\setminus \lbrace 0 \rbrace$ under the action of $\TT^{\C}$ 
if and only if there is a balanced family in $N^*_{def}(\Sigma)$.
In that case,
$$
H^1(X,\tsheaf_ X)^{ss}\setminus \lbrace 0 \rbrace=\bigcup_{I\in \nu(\Sigma)} U_{\lbrace R_i, i\in I \rbrace}.
$$
\end{proposition}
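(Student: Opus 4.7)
The plan is to make the ring of $\TT^\CC$-invariant polynomials on $V=H^1(X,\tsheaf_X)$ completely explicit via Lemma~\ref{action}, and then translate the GIT condition into the combinatorial language of balanced families.

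First I would observe that the chosen basis $(E_{i,k})$ is adapted to the weight decomposition, so each coordinate function $X_{i,k}$ is a $\TT^\CC$-semi-invariant of weight $R_i$. A monomial $\prod_{i,k} X_{i,k}^{a_{i,k}}$ is therefore a semi-invariant of weight $\sum_{i,k} a_{i,k}R_i$, and since the polynomial algebra is $N^*$-graded by the $\TT^\CC$-action, the invariant subring is spanned by those monomials whose total weight vanishes. Setting $b_i=\sum_k a_{i,k}\in\N$, a non-constant invariant monomial exists precisely when there is a non-trivial $\N$-relation $\sum_i b_i R_i=0$ among the $R_i$, that is, when $N^*_{def}(\Sigma)$ contains a balanced family. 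This already yields the first assertion of the proposition.

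For the explicit description of $H^1(X,\tsheaf_X)^{ss}\setminus\{0\}$, I would verify both inclusions. If $x=\sum_i x_i$ lies in $U_{\{R_i : i\in I\}}$ for some $I\in\nu(\Sigma)$, then $x_i\neq 0$ for each $i\in I$, so one can pick $k_i$ with $X_{i,k_i}(x)\neq 0$; choosing positive integers $a_i$ realising the balanced relation $\sum_{i\in I} a_i R_i=0$, the product $\prod_{i\in I} X_{i,k_i}^{a_i}$ is a non-constant $\TT^\CC$-invariant not vanishing at $x$, so $x$ is semi-stable. Conversely, if $x$ is semi-stable, pick a non-constant invariant monomial $P=\prod X_{i,k}^{a_{i,k}}$ with $P(x)\neq 0$ and let $I=\{i : \exists k,\ a_{i,k}>0\}$. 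The integers $b_i=\sum_k a_{i,k}$ give a non-trivial balanced relation supported on $I$, hence $I\in\nu(\Sigma)$, and the nonvanishing of $P$ at $x$ forces $X_{i,k}(x)\neq 0$ for every $(i,k)$ with $a_{i,k}>0$, in particular $x_i\neq 0$ for each $i\in I$, so $x\in U_{\{R_i : i\in I\}}$.

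I do not expect any serious obstacle: once the weight decomposition from Lemma~\ref{action} is in hand, the claim reduces to elementary linear GIT on a representation of a torus. The only care needed is the bookkeeping between the vector-level data $x_i\neq 0$ and the coordinate-level data $X_{i,k}(x)\neq 0$, which is handled by the freedom to choose an index $k_i$ inside each weight space $H^1(X,\tsheaf_X)(R_i)$ on which $\TT^\CC$ acts by the single character $R_i$.
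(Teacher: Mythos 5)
Your proposal is correct and follows essentially the same route as the paper: diagonalize the $\TT^\CC$-action in the adapted basis, observe that the invariant ring is spanned by monomials of vanishing total weight $\sum_i b_i R_i=0$, and match the supports of such monomials with balanced families to get both the existence statement and the description of $H^1(X,\tsheaf_X)^{ss}\setminus\{0\}$ as the union of the $U_{\{R_i,\,i\in I\}}$. Your bookkeeping between $x_i\neq 0$ and $X_{i,k}(x)\neq 0$ is in fact slightly more careful than the paper's, but the argument is the same.
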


\begin{proof}
 Let $V=H^1(X,\tsheaf_ X)$ and 
$$
\forall R\in N^*_{def}(\Sigma),\; W_R=H^1(X,\tsheaf_ X)(R).
$$
Let $P\in A=\C[X_{i,k}]$ and suppose that $P$ is not constant. 
Write
$$
P=\sum_J a_J X^J
$$
in a basis of $A$, with $X^J=X_1^{j_1}..X_r^{j_r}$.
Given the action of the torus on $V$ described in lemma~\ref{action}, we see that
$P\in A^G$ if and only if each component of $P$ is in $A^G$.
Thus we suppose that $P$ is written
$$
P=aX_{1,1}^{j_{1,1}}..X_{1,d_1}^{j_{1,d_r}}X_{2,1}^{j_{2,1}}..X_{2,d_2}^{j_{2,d_2}}...X_{s,d_s}^{j_{s,d_s}}.
$$
Then the action of $G$ on $P$ is:
$$
\forall \mathbf{\lambda}=(\lambda_1,...,\lambda_n)\in G,
$$
$$
\mathbf{\lambda}\cdot P= (\Pi_{i=1}^s
(\lambda_1^{\langle R_i, e_1 \rangle}..\lambda_n^{\langle R_i, e_n \rangle})^{\sum_{k=1}^{d_i} j_{i,k}} )P.
$$
Thus
$$
\mathbf{\lambda}\cdot P=
\lambda_1^{\langle \sum_i \sum_{k=1}^{d_i} j_{i,k} R_i, e_1 \rangle}..\lambda_n^{\langle \sum_i \sum_{k=1}^{d_i} j_{i,k}R_i, e_n \rangle} P,
$$
and 
$$
\forall \mathbf{\lambda}=(\lambda_1,...,\lambda_n)\in G, \; \mathbf{\lambda}\cdot P=P
$$
if and only if 
$$
\forall l\in \lbrace 1..n \rbrace,\; \langle \sum_i \sum_{k=1}^{d_i} j_{i,k} R_i, e_l \rangle=0 
$$
that is if and only if
$$
\sum_i \sum_{k=1}^{d_i} j_{i,k} R_i=0.
$$
We just proved that there exists semi-stable points in $V$ if and only if there exists a non trivial positive linear combination
of the elements $R_i$ that vanishes in $N^*$, which is a balanced family.
Moreover, we can describe the set $V^{ss}$ of semi-stable points in that case.
Let $(a_1,a_2,..,a_s)\in \N^s-\lbrace (0,0,..,0)\rbrace$ such that $\sum_i a_i R_i =0$. For each $i$, decompose
$a_j=\sum_k j_{j,k}$ into sum of integers (eventually zero). 
Set $P=\Pi X_{i,k}^{j_{i,k}}$. By construction, $P$ is $G$-invariant, and 
$$
\lbrace x\in V / P(x)\neq 0 \rbrace = \lbrace (x_{1,1},.., x_{s,d_s}) / x_{i,k}\neq 0 \text{ if } j_{i,k}\neq 0\rbrace.
$$
Then, the set of semi-stable points is the union of sets of this kind.

\end{proof}

We now describe the set of polystable points $H^1(X,\tsheaf_ X)^p$.
For each $R\in N^*- \lbrace 0 \rbrace$, define 
$$
\lbrace R<0 \rbrace=\lbrace x\in N / \langle R, x \rangle <0 \rbrace
$$
and 
$$
\lbrace R=0 \rbrace=\lbrace x\in N / \langle R, x \rangle =0 \rbrace.
$$
Let $\mu(\Sigma)$ be the set of all
$I\subset \lbrace 1,..,s \rbrace$, such that
$$
\exists k_1,..,k_r \in I \;/\; \lbrace R_{k_1},..,R_{k_r} \rbrace \text{ is a balanced family}
$$
and
$$
N = (\cup_{i\in I} \lbrace R_i<0 \rbrace) \bigcup (\cap_{i\in I}  \lbrace R_i=0 \rbrace ).
$$
For each family of indices $I\subset \{1,\cdots,s\}$, we consider the direct sum $\bigoplus
_{i\in I} H^1(X,\tsheaf_ X)(R_i)$ and we decompose each vector $x=\sum
x_i$  with $x_i\in H^1(X,\tsheaf_ X)(R_i)$.
Let $V_I$ be the finite union of
subvector spaces given by the equations $x_i=0$ for some $i\in I$.
Put
$$
S_I=\bigoplus _{i\in I} H^1(X,\tsheaf_ X)(R_i) \setminus V_I,
$$
Then the set of polystable points $H^1(X,\tsheaf_ X)^{p}$ satisfies the following:

\begin{proposition}
\label{p}          
 There exist polystable points in $H^1(X,\tsheaf_ X)\setminus \lbrace 0 \rbrace$ under the action of $\TT^{\C}$ 
if and only if there is a balanced family in $N^*_{def}(\Sigma)$.
In that case, 
$$
H^1(X,\tsheaf_ X)^{p}\setminus \lbrace 0 \rbrace=\bigcup_{I\in \mu(\Sigma)} S_I.
$$
\end{proposition}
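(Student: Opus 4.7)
The plan is to reduce the polystability question to the Hilbert--Mumford criterion for closed torus orbits on a multigraded vector space, and then to translate the resulting numerical condition into the combinatorics defining $\mu(\Sigma)$. Throughout I decompose $x = \sum_i x_i \in V := H^1(X,\tsheaf_X)$ according to the weight decomposition and write $I_x = \{i : x_i \neq 0\}$ for its support. Since a nonzero polystable point is in particular semistable, Proposition \ref{ss} already forces condition (1) of $\mu(\Sigma)$ on $I_x$, so it only remains to characterize the closedness of the orbit $\TT^\C \cdot x$ for semistable $x$.

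The first key step is to apply the Hilbert--Mumford criterion: since $\TT^\C$ is reductive, $\TT^\C \cdot x$ is closed if and only if, for every integer cocharacter $\lambda_v : \C^* \to \TT^\C$ with $v \in N$ such that $\lim_{t \to 0} \lambda_v(t) \cdot x$ exists in $V$, that limit already lies in $\TT^\C \cdot x$. Using Lemma \ref{action} one computes $\lambda_v(t) \cdot x = \sum_i t^{\langle R_i, v\rangle} x_i$, so the limit exists iff $\langle R_i, v\rangle \geq 0$ for every $i \in I_x$, and then equals $\sum_{i \in I_x,\, \langle R_i, v\rangle = 0} x_i$. Because the weight spaces are $\TT^\C$-invariant, two elements of $V$ lying in the same orbit must share the same support; hence the limit lies in the orbit precisely when $\langle R_i, v\rangle = 0$ for every $i \in I_x$, in which case the 1-PS acts trivially and the limit equals $x$. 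Taking the contrapositive, $\TT^\C \cdot x$ is closed if and only if every $v \in N$ either satisfies $\langle R_i, v\rangle < 0$ for some $i \in I_x$, or $\langle R_i, v\rangle = 0$ for all $i \in I_x$ --- which is exactly condition (2) of $\mu(\Sigma)$ for $I = I_x$.

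Combining the two steps, a nonzero $x \in V$ is polystable if and only if $I_x \in \mu(\Sigma)$; since the points with support exactly $I$ form $S_I$, this yields the stated decomposition $H^1(X,\tsheaf_X)^p = \{0\} \cup \bigcup_{I \in \mu(\Sigma)} S_I$. The existence clause then follows at once: given a balanced family with relation $\sum a_i R_i = 0$ and $a_i > 0$, the identity $\sum a_i \langle R_i, v\rangle = 0$ forces the inequalities $\langle R_i, v\rangle \geq 0$ to be equalities, so condition (2) is automatic, $I \in \mu(\Sigma)$, and one obtains nontrivial polystable points; the converse is Proposition \ref{ss}. The step that requires the most care is the equivalence between the lattice formulation of the closedness criterion and the stated condition (2): Hilbert--Mumford only tests integer cocharacters, whereas a priori one might worry about rational or real $v$ producing forbidden limits. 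This turns out to be harmless because $\operatorname{sign}\langle R_i, v\rangle$ is preserved under positive rescaling and every rational ray meets $N$, so testing on $N$, $N_\Q$ and $N_\R$ yields the same condition --- a point that should nevertheless be verified when writing up the argument.
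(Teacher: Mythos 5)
Your argument is correct and follows essentially the same route as the paper: both reduce polystability to the Hilbert--Mumford test on one-parameter subgroups $\lambda_v$, $v\in N$, compute the weights via Lemma \ref{action}, and observe that a limit, when it exists, has strictly smaller support unless all pairings $\langle R_i,v\rangle$ vanish on $I_x$, which yields exactly condition (2) of $\mu(\Sigma)$. Your explicit remark that orbits preserve supports, and your check that integer cocharacters suffice, are welcome clarifications of steps the paper leaves implicit (the paper instead verifies that the limit is semistable using the balanced subfamily), but the substance is the same.
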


\begin{proof}
We keep notations of the proof of proposition~\ref{ss}.
 The set of polystable points is the subset of semistable points $x$ such that the orbit $G\cdot x$ is closed in
$V^{ss}$.
Let $x\in V^{ss}$, $x=x_1+..+x_s, x_i\in W_{R_i} $.
Let $I_x=\lbrace i\vert x_i \neq 0 \rbrace$.
By proposition~\ref{ss}, there is $\lbrace i_1,..,i_r \rbrace \in \nu(\Sigma)$ such that $\lbrace i_1,..,i_r \rbrace \subset I_x$.
By the Hilbert-Mumford criterion, the orbit $G\cdot x$ is closed in $V^{ss}$
if and only if for each one-parameter subgroup $\C^*$ of $G$, 
the orbit $\C^* \cdot x$ is closed in $V^{ss}$.
One parameter subgroup of $G$ can be represented by
$$
\lambda_\textbf{p} \in \C^* \mapsto (\lambda^{p_1}, ... , \lambda^{p_n})\in \TT^{\C}
$$
for some $\textbf{p}=(p_1,..,p_n)\in \Z^n$.
For each $\textbf{p}\in \Z^n$, the action of the associated one-parameter
subgroup is
$$
\lambda_{\textbf{p}}\cdot x= \sum_j \lambda^{\langle R_j, p_1e_1 + ..+p_n e_n\rangle} x_j.
$$
To test closedness, it is enough to understand what happens when $\lambda$ tends to zero for each $\textbf{p}\in \Z^n$.
We can fix $(a_{i_k})\in N^{*r}$ such that $a_{i_1}R_{i_1}+..+a_{i_r}R_{i_r}=0$, thus 
$$
\langle a_{i_1}R_{i_1}+..+a_{i_r}R_{i_r}, p_1e_1 + ..+p_n e_n\rangle=0
$$
and $\exists (l,l')$ such that
$$
\langle R_{i_l}, p_1e_1 + ..+p_n e_n\rangle\langle R_{i_{l'}}, p_1e_1 + ..+p_n e_n\rangle <0
$$
unless
$$
\forall i_k \: \langle R_{i_k}, p_1e_1 + ..+p_n e_n\rangle=0 .
$$
In the first case, there is no limit in $V$ when $\lambda$ tends to zero.
In the second case, there is a limit $x_{\infty}$ in $V$ if and only if
$$
\forall j\in I_x,\; \langle R_j, p_1e_1 + ..+p_n e_n\rangle \geq 0
$$
and as 
$$
\forall i_k \; \langle R_{i_k}, p_1e_1 + ..+p_n e_n\rangle=0 
$$
this limit satisfies $x_{\infty i_k}\neq 0$ so $x_{\infty}\in V^{ss}$.
Thus we see that $x$ is polystable if and only if $\forall p\in N$, one of the following is satisfied
$$
\exists j \in I_x / \langle R_j, p\rangle < 0
$$
or
$$
\forall j \in I_x / \langle R_j, p\rangle = 0.
$$
That is exactly saying that 
$$
N = (\cup_{i\in I_x} \lbrace R_i<0 \rbrace) \bigcup (\cap_{i\in I_x}  \lbrace R_i=0 \rbrace )
$$
and
$$
I_x\in \mu(\Sigma)
$$
and we have $x\in S_{I_x}$.
To conclude the proof, note that if $\lbrace R_{i_1},..,R_{i_r})\in N^*$ is a balanced family, 
the point $ x_{i_1}+..+x_{i_r} \in W_{i_1} \oplus .. \oplus W_{i_r}$ with $ \;x_{i_k}\neq 0$ for all $k$
is polystable.
\end{proof}

In order to deal with complex deformations that preserve some torus action,
we need relative stability results. We will now describe polystability results
of subspaces of $H^1(X,\tsheaf_ X)$ that are fixed by a sub-torus action.
For each $p\in N$, $x\in\ V$ is fixed by the action of the corresponding
one-parameter subgroup if and only if 
$$
\forall j/ x_j \neq 0, \langle R_j, p \rangle = 0.
$$
Let's consider a splitting
$$
N=N_f\oplus N_a.
$$
It induces a decomposition of the torus
$$
\TT^{\C}=\TT_f \times \TT_a
$$
with
$$
\TT_a=N_a\otimes_{\C} \C^* \text{ and } \TT_f=N_f\otimes_{\C}\C^*.
$$
If $f_1, .. , f_d$ is a basis for $N_f$,
then the fixed set of $\TT_f$ is
$$
H^1(X,\tsheaf_ X)^{\TT_f}=\lbrace x \in H^1(X,\tsheaf_ X) \vert \forall j / x_j\neq 0, \forall l\in\lbrace 1 ,.., d\rbrace, \langle R_j, f_l \rangle = 0 \rbrace.
$$
and $\TT^a$ acts on $H^1(X,\tsheaf_ X)^{\TT_f}$.
Every $p\in N$ can be written $p=p_f+p_a \in N_f\oplus N_a$ and for every $x\in H^1(X,\tsheaf_ X)^{\TT_f}$,
$$
\lambda_p\cdot x= \lambda_{p_a+p_f}\cdot x= \lambda_{p_a}\cdot \lambda_{p_f} \cdot x= \lambda_{p_a} \cdot x.
$$
Thus the stability with respect to every one-parameter subgroup of $\TT^{\C}$ is equivalent to the stability
with respect to every one-parameter subgroup of $\TT_a$ on $H^1(X,\tsheaf_ X)^{\TT_f}$.
Let
$$
 N^*_{\TT_f}(\Sigma)=\lbrace R\in N^*_{def}(\Sigma) / \forall l\in\lbrace 1 ,.., d\rbrace, \langle R, f_l \rangle = 0 \rbrace
$$
and
$$
\mu_{\TT_f}(\Sigma)=\lbrace I \in \mu(\Sigma) / \forall i\in I, R_i \in N^*_{\TT_f}(\Sigma)\rbrace.
$$
Then, the results of proposition~\ref{ss} and proposition~\ref{p} imply

\begin{proposition}
\label{relativep}
 There exist polystable points in $H^1(X,\tsheaf_ X)^{\TT_f}\setminus \lbrace 0 \rbrace$ under the action of $\TT_a$ 
if and only if there is a balanced family in $N^*_{\TT_f}(\Sigma)$.
In that case, the set of polystable points is
$$
H^1(X,\tsheaf_ X)^{\TT_f p}= \lbrace 0 \rbrace \cup \bigcup_{I\in\mu_{\TT_f}(\Sigma) } S_I.
$$
\end{proposition}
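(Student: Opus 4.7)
The plan is to reduce the statement to Propositions \ref{ss} and \ref{p} by replacing the ambient vector space and the torus by $H^1(X,\tsheaf_X)^{\TT_f}$ and $\TT_a$ respectively, exploiting the splitting $N = N_f \oplus N_a$.

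First, I would identify the weight decomposition of $H^1(X,\tsheaf_X)^{\TT_f}$ under the residual $\TT_a$-action. By Lemma \ref{action} a weight space $H^1(X,\tsheaf_X)(R)$ for the full $\TT^{\CC}$-action is fixed pointwise by $\TT_f$ if and only if the character $R$ is trivial on $N_f$, i.e. if and only if $R\in N^*_{\TT_f}(\Sigma)$. Hence
$$H^1(X,\tsheaf_X)^{\TT_f}=\bigoplus_{R\in N^*_{\TT_f}(\Sigma)}H^1(X,\tsheaf_X)(R),$$
and the induced $\TT_a$-action has weights obtained by restricting each such $R$ to $N_a$, which is injective on $N^*_{\TT_f}(\Sigma)\subset\mathrm{Hom}(N_a,\ZZ)$.

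Next I would re-run, \emph{mutatis mutandis}, the monomial computation of Proposition \ref{ss} on the coordinate ring of $H^1(X,\tsheaf_X)^{\TT_f}$. A non-constant monomial $\prod X_{i,k}^{j_{i,k}}$ (now with $R_i\in N^*_{\TT_f}(\Sigma)$) is $\TT_a$-invariant if and only if $\sum_i\bigl(\sum_k j_{i,k}\bigr)R_i$ vanishes on $N_a$; since each such $R_i$ already vanishes on $N_f$, this vanishing on $N_a$ is equivalent to the vanishing in $N^*$. This gives existence of semi-stable points off the origin iff $N^*_{\TT_f}(\Sigma)$ contains a balanced family, and the description of $V^{ss}\setminus\{0\}$ as the union of the $U_{\{R_i,\,i\in I\}}$ over balanced $I\subset\{1,\dots,s\}$ with $R_i\in N^*_{\TT_f}(\Sigma)$.

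Finally I would apply the Hilbert-Mumford criterion as in Proposition \ref{p}. One-parameter subgroups of $\TT_a$ correspond to $p_a\in N_a$. For $R\in N^*_{\TT_f}(\Sigma)$ and any lift $p=p_f+p_a\in N$ one has $\langle R,p\rangle=\langle R,p_a\rangle$, so both cones $\{R<0\}$ and $\{R=0\}$ are $N_f$-invariant, and the closedness criterion for the orbit $\TT_a\cdot x$ (with $x=\sum x_i$ and $I_x=\{i:x_i\ne 0\}$) reads
$$N_a=\Bigl(\bigcup_{i\in I_x}\{R_i<0\}\cap N_a\Bigr)\cup\Bigl(\bigcap_{i\in I_x}\{R_i=0\}\cap N_a\Bigr),$$
which by $N_f$-invariance is equivalent to the same identity with $N$ in place of $N_a$, namely the defining condition of $\mu(\Sigma)$ applied to $I_x$. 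Combined with the balanced subfamily condition from the semi-stability step, this says precisely $I_x\in\mu_{\TT_f}(\Sigma)$, and yields $x\in S_{I_x}$. The converse (that every such $x$ is indeed polystable) is obtained exactly as at the end of the proof of Proposition \ref{p}.

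No new ideas are required: the argument is a line-by-line transcription of Propositions \ref{ss} and \ref{p} to the $\TT_f$-invariant subspace. The only delicate point, and the one worth double-checking, is that the splitting $N=N_f\oplus N_a$ genuinely reduces the Hilbert-Mumford test to one-parameter subgroups in $N_a$, which is ensured by the $N_f$-invariance of the cones $\{R_i<0\}$ and $\{R_i=0\}$ for $R_i\in N^*_{\TT_f}(\Sigma)$.
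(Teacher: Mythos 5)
Your proposal is correct and follows essentially the same route as the paper: the paper's argument is precisely the reduction, via the splitting $N=N_f\oplus N_a$ and the identity $\lambda_p\cdot x=\lambda_{p_a}\cdot x$ on $H^1(X,\tsheaf_X)^{\TT_f}$, of the stability question to Propositions \ref{ss} and \ref{p} applied to the $\TT_a$-action on the fixed subspace. Your write-up merely makes explicit the details (weight decomposition of the fixed locus, invariance of the cones $\{R_i<0\}$ and $\{R_i=0\}$ under $N_f$) that the paper leaves to the reader.
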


\begin{remark}
 The description of stable points in $\P(V)$ under a torus action given by a representation on a vector space $V$ 
is given by Sz\'ekelyhidi in terms of a weight polytope, \cite{szt}. Our results are closely related to this description.
\end{remark}

\subsection{Existence of toric extremal deformations}
Using the general setup of section~\ref{deformation} and the stability criteria of section~\ref{stable},
we are now able to prove our main results on deformations of extremal toric manifolds.

\begin{theorem}
\label{equivextremal}
Let $X=TV(\Sigma)$ be a smooth compact toric manifold endowed with an extremal toric
K\"ahler structure $(J,\om)$.
Let $H$ be the group of Hamiltonian isometries of $(J,\om)$ and assume $H^{\C}=\TT^{\C}$.

Suppose that $H^2(X,\tsheaf_ X)=0$ and consider the semiuniversal toric family of deformations
$X\hookrightarrow \cX\to
B$ of
$X\simeq \cX_0$ with $B$ identified to a  ball centered at the
origin in $H^1(X,\tsheaf_
X)$.

Suppose that the extremal vector field is contained in the Lie algebra of a torus $\TT_f \subset\TT^{\C}$ .
In that case, 
for each $t$ small enough in
$$
\lbrace 0 \rbrace \cup \bigcup_{I\in \mu_{\TT_f}(\Sigma)} S_I
$$
$\cX_t$ admits an extremal metric.

If $[\om]$ represents a polarization $L$ of $X$ we can suppose $\cX$ to
be polarized by $\cL$. Then if $\TT_f$
is a maximal torus of automorphisms of $\cX_t$, $t$ belongs to $\lbrace 0 \rbrace \cup \bigcup_{I\in \mu_{\TT_f}(\Sigma)} S_I$
if and only if $\cX_t$ admits an extremal metric in the class $c_1(\cL_t)$
\end{theorem}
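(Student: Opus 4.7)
The strategy is to apply the $G$-equivariant deformation theory developed in Section \ref{deformation} with the choice $G=\TT_f$, and then to feed the outcome into the combinatorial description of polystable orbits furnished by Proposition \ref{relativep}.

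First I would verify the hypotheses needed to set up the slice. Since $X$ is a smooth compact toric manifold it is simply connected and $H^2(X,\cO)=0$; together with the assumption $H^2(X,\tsheaf_X)=0$ and the fact that $H_s\subset \TT_f$ (the extremal vector field lies in the Lie algebra of $\TT_f$), Proposition \ref{slice} furnishes a $\TT_f$-invariant Kuranishi slice $\Phi^G:B_G\to \cJ^G$. Applying the $\TT_f$-equivariant version of Lemma \ref{gcase} (via the $H$-equivariance of the isomorphism noted there) identifies $\widetilde{H}^1_G$ with $H^1(X,\tsheaf_X)^{\TT_f}$. Since $H^\C=\TT^\C$, writing $\TT^\C=\TT_f^\C\times \TT_a^\C$ identifies $K_G^\C$ with $\TT_a^\C$, acting on $H^1(X,\tsheaf_X)^{\TT_f}$ in exactly the way analyzed in Proposition \ref{relativep}.

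Next I would invoke Proposition \ref{linearstability}: every polystable orbit of the linear $\TT_a^\C$-action on $\widetilde{H}^1_G$ contains, after possibly shrinking $B_G$, a point where $\mu^G$ vanishes, i.e.\ where $s^G_{J_t}=0$. Because the extremal vector field lies in the Lie algebra of $\TT_f$, the vanishing of the reduced scalar curvature relative to $\TT_f$ is precisely the extremal condition. By Lemma \ref{lemma:ksH} the semiuniversal toric family already covers all small $\TT$-equivariant (hence $\TT_f$-invariant) deformations, and using Moser's trick as in the proof of Lemma \ref{gcase} one replaces the family by one whose fibres are $\om$-compatible. Combining Proposition \ref{linearstability} with the explicit description of the $\TT_a^\C$-polystable locus from Proposition \ref{relativep} then produces extremal metrics on $\cX_t$ for every $t$ lying in $\{0\}\cup \bigcup_{I\in \mu_{\TT_f}(\Sigma)}S_I$.

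For the converse in the polarized case with $\TT_f$ maximal in $\Aut(\cX_t)$, I would appeal to Theorem \ref{extremaldefo}: the existence of an extremal metric in the class $c_1(\cL_t)$ forces relative K-polystability with respect to $\TT_f$, which via Sz\'ekelyhidi's finite-dimensional reduction forces GIT polystability of the corresponding point in $H^1(X,\tsheaf_X)^{\TT_f}$ under the $\TT_a^\C$-action, whereupon Proposition \ref{relativep} places $t$ in the stated set. The main subtle point, beyond the routine assembly of the machinery, will be to match the $\TT_f$-fixed locus of the base $B$ of the semiuniversal toric family with the slice $B_G$ coming from the symplectic reduction, so that linear GIT stability on $B_G$ translates directly into existence (and, in the maximal polarized case, also obstruction) of extremal metrics on nearby fibres.
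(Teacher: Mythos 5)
Your proposal follows essentially the same route as the paper's own proof: Lemma \ref{gcase} to identify the $\TT_f$-invariant slice with a neighborhood of the origin in $H^1(X,\tsheaf_X)^{\TT_f}$, Proposition \ref{linearstability} to produce zeros of the reduced scalar curvature on polystable $\TT_a^{\C}$-orbits, Proposition \ref{relativep} for the combinatorial description of those orbits, and Theorem \ref{extremaldefo} (Stoppa--Sz\'ekelyhidi) for the converse in the polarized maximal-torus case. The only difference is that you make explicit the identification of the slice with the $\TT_f$-fixed locus of the semiuniversal base, a step the paper leaves implicit; this is a correct and complete reconstruction.
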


\begin{proof}
Recall that $X$ is simply connected and that $H^2(X,\mathcal{O})=0$. Together with the hypothesis $H^2(X,\tsheaf_ X)=0$,
by the lemma~\ref{gcase}, we know that the equivariant slice constructed 
in section~\ref{reduction} corresponds to a map from a neighborhood of zero in $H^1(X,\tsheaf_ X)^{\TT_f}$
to the space of $\om$-compatible and $\TT_f$-invariant integrable complex structures on the underlying differentiable manifold.
By proposition~\ref{linearstability}, there is a neighborhood $U$ of zero in $ H^1(X,\tsheaf_ X)^{\TT_f}$
such that every polystable point in $U$ under the action of $\TT^{\C}/\TT_f$ gives rise to an extremal metric on
the corresponding complex manifold.
Then the description of polystable points in proposition~\ref{p} ends the proof of the first part of the theorem.
The existence of a balanced family in $N^*_{\TT_f}(\Sigma)$
is equivalent to the existence of polystable points in $H^1(X,\tsheaf_ X)^{\TT_f}$ and the last part of the theorem follows from the discussion of
the section~\ref{sec:polarized}
\end{proof}

\begin{remark}
The existence of projective deformation endowed with extremal metric is thus equivalent to the existence of a balanced family
in the space $H^1(X,\tsheaf_X)^\TT_f$. This can be interpreted as a rigidity result for polarized extremal metrics.
Note that the stability
condition for the existence of an extremal projective deformation
does not depend on the K\"ahler class.
\end{remark}

\begin{remark}
  If $H^2(X,\Theta_X)\neq 0$, the deformation theory could be
  obstructed.
Following Kuranishi \cite{Ku}, the set of integrable complex structures
in the slice corresponds to the vanishing locus in $B\subset
H^1(X,\Theta_X)$ of a holomorphic
function $B\to H^2(X,\Theta_X)$. The next step would be to understand
whether polystable orbits intersect (and therefore are contained in)
the vanishing locus of the obstruction map.

 This general situation seems rather pathological and very
 interesting. However little hope is left for constructing examples since experts in the field of
 $T$-varieties  expect the obstruction map to be identically zero in
 the toric case \cite{iltp}.
\end{remark}

\subsection{Deformation of extremal toric surfaces}

The case of surfaces deserves special attention as it admits an even simpler formulation.
First of all, from corollary 1.5. \cite{il} of Ilten, $H^2(X,\tsheaf_ X)=0$ and the deformation theory is unobstructed.

Moreover, the space $H^1(X,\tsheaf_ X)$ admits a simpler description.
Let's number the rays of $\Sigma^{(1)}$ by $ \rho_1,..,\rho_l$ and $\rho_{l+1}=\rho_1$.
From corollary 1.5. \cite{il}, we have
$$
N^*_{def}(\Sigma)=\lbrace R\in N^*/ \exists \rho_i\in\Sigma^{(1)}/ \langle \rho_i, R\rangle =-1 \text{ and } \langle \rho_{i\pm1}, R\rangle <0 \rbrace
$$
so that it is easy to understand polystable points. We will proceed to explicit computations in the following section.

It is also easy to understand the restriction $H^{\C}=\TT^{\C}$ needed in the deformation of
CSC metrics in the case of surfaces.
We suppose that the toric surface $X=TV(\Sigma)$ is endowed with an extremal metric.
By Calabi's theorem, the group of Hamiltonian isometries is a maximal compact subgroup of $\Aut(X)$.
Up to conjugation, we can suppose that $\TT^{\C}\subset H^{\C}$ and we want to understand when the equality holds.
Every smooth compact surface is a successive equivariant blow-up of $\P^2$ or $\F_a$, the $a^{\text{th}}$ Hirzebruch surface.
As $\P^2$ is rigid and a one point blow-up of $\P^2$ is isomorphic to $\F_1$, we restrict our attention
to the $\F_a$s, $a\geq 0$.

A result of Demazure \cite{dem} describes the automorphism group of a compact non-singular toric manifold.
In particular, the Lie algebra of $\Aut(X)$ can be decomposed in the following manner
$$
Lie(\Aut(X))=Lie(\TT^{\C})\oplus \mathfrak{V}
$$
with $\mathfrak{V}$ a vector space generated by vector fields in one to one correspondence with the \textit{root system} of the fan:
$$
R(N,\Sigma)=\lbrace \alpha \in N^* / \exists \rho \in \Sigma^{(1)} / \langle \alpha, \rho \rangle =1 \text{ and } \langle \alpha, \rho' \rangle \leq 0 
\text{ for } \rho'\in\Sigma^{(1)}, \rho'\neq 0 \rbrace .
$$
As $\F_0=\C\P^1\times\C\P^1$, it is rigid.
For $a>0$, $\F_a$ can be endowed with one of Calabi's extremal metric in each K\"ahler class.
In that case, the extremal vector field is by construction vertical in the fibration 
$$
\F_a \rightarrow \C\P^1.
$$
Let $\Sigma_a$ be the complete fan associated to $\F_a$ in the lattice $N=\Z e_1\oplus \Z e_2$ with
$$
\Sigma^{(1)}_a=\lbrace e_1, e_2, -e_2, -e_2-ae_1 \rbrace 
$$
where again we identify rays with their primitive generators.
In that case the vertical action is generated by $\C^*\otimes_{\Z} \Z e_2$.
Then we compute 
$$
N_{def}^*(\Sigma_a)=\lbrace x e_1^*+e_2^*, 1-a \leq x \leq -1 \rbrace
$$
And 
$$
N^*_{\TT_{e_2}}(\Sigma_a)= \varnothing.
$$
Thus there is no polarized family of deformation of Calabi's extremal metric.
In the sequel, we will consider toric surfaces that are obtained from $\F_a$ by at least one blow-up.
Recall that if $(\rho_i)$ denotes the rays of the fan of a toric surface $X$,
for each $\sigma=\R^{+*}\rho_j\oplus\R^{+*}\rho_k$ we can define a fixed-point set
of the torus action $V_{\rho_j,\rho_k}=0$ in $Spec(\C[X \cap \lbrace x/x_{\vert \sigma}\geq 0\rbrace])\subset X$.
Then the one point equivariant blow-up of $X$ at the point $V_{\rho_j,\rho_k}$ is described by the coarsest fan containing the $(\rho_i)$ and $\rho_j+\rho_k$. We will say that $\rho_j+\rho_k$
is a ray obtained from a blow-up of $X$.
Then we have

\begin{proposition}
\label{blup}
 Let $X=TV(\Sigma)$ be a toric surface obtained from $\F_a$ by $k$ blow-ups, $k\geq 2$.
Let $(\rho_k)$ denote the generators of the rays obtained from the blow-ups of $\F_a$.
If there exists $(\rho_1,\rho_2)$ such that
$$
\langle e_1^*,\rho_1 \rangle >0 \text{ and } \langle -e_1^*, \rho_2 \rangle >0 \text{ if } a\geq 1
$$
or
$$
\rho_1=-\rho_2 \text{ if } a=0
$$
then the complexification of the maximal subgroup of $X$ is the torus $\TT^{\C}$.
\end{proposition}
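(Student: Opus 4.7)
The plan is to invoke Demazure's description of $\Aut(X)$ recalled just above: since $\mathrm{Lie}(\Aut(X))=\mathrm{Lie}(\TT^{\C})\oplus\mathfrak{V}$ with $\mathfrak{V}$ spanned by root vectors in bijection with $R(N,\Sigma)$, the equality $H^{\C}=\TT^{\C}$ is equivalent to the emptiness of the root system $R(N,\Sigma)$. Thus I would reduce the proposition to showing that the hypothesis on $\rho_1,\rho_2$ forces $R(N,\Sigma)=\varnothing$.

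The first step is to enumerate the roots of the Hirzebruch fan $\Sigma_a$ itself. Writing a candidate root as $\alpha=xe_1^*+ye_2^*$ and running through each of the four rays of $\Sigma_a$ as the distinguished ray $\rho^{*}$, the sign conditions coming from $\langle\alpha,\pm e_2\rangle$ immediately force $y\in\{-1,0,1\}$, and the remaining inequalities pin $x$ down to finitely many integers. This yields a very short explicit list of roots of $\F_a$: for $a=0$ the four roots $\pm e_1^*,\pm e_2^*$; for $a=1$ the four roots $\pm e_1^*$, $e_2^*$ and $e_2^*-e_1^*$; and for $a\geq 2$ only $e_1^*$ and a single $e_2^*$-type root.

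The second step is to observe that a root $\alpha$ of $\Sigma_a$ survives in $\Sigma$ if and only if $\langle\alpha,\rho\rangle\leq 0$ for every blow-up ray $\rho$ of $\Sigma$ (its distinguished ray lies in $\Sigma_a^{(1)}$ since the shortlist shows no candidate root is attached to a blow-up ray). The hypotheses are tailored to break exactly these inequalities: $\langle e_1^*,\rho_1\rangle>0$ rules out $\alpha=e_1^*$ (since $\rho_1$ is a blow-up ray, hence distinct from the original ray $e_1$), and $\langle -e_1^*,\rho_2\rangle>0$ rules out $-e_1^*$ when $a=1$. In the case $a=0$, the condition $\rho_2=-\rho_1$ places $\rho_1$ and $\rho_2$ in opposite open quadrants of $N_\RR$, which simultaneously kills all four roots $\pm e_1^*,\pm e_2^*$.

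It remains to exclude the roots of the form $e_2^*+xe_1^*$ with $x\leq 0$ (notably $e_2^*$ itself, and $e_2^*-e_1^*$ when $a=1$); this is where I expect the main obstacle. The idea is to use the combinatorial fact that any ray produced by iterating smooth blow-ups of $\F_a$ lies strictly in the interior of some two-dimensional cone of $\Sigma_a$, so its coordinates have the sign pattern prescribed by that cone. Combining this with the opposite-sign hypothesis on $\rho_1$ and $\rho_2$, one checks that the inequality $\langle e_2^*+xe_1^*,\rho_i\rangle\leq 0$ cannot hold for both $i=1,2$ and all admissible $x$. A final short verification eliminates the possibility of a new root attached to a blow-up ray itself, by noting that the existence of such a root would confine all other rays to a closed half-space not simultaneously compatible with $\langle e_1^*,\rho_1\rangle>0$ and $\langle e_1^*,\rho_2\rangle<0$. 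Together these cases give $R(N,\Sigma)=\varnothing$, hence $H^{\C}=\TT^{\C}$.
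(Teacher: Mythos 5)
Your reduction of the statement to $R(N,\Sigma)=\varnothing$ is where the argument breaks. Emptiness of the root system is \emph{sufficient} for $H^{\C}=\TT^{\C}$ but not necessary, and under the stated hypotheses it is in general \emph{false} for $a\geq 1$, so the step you flag as ``the main obstacle'' cannot be carried out. Concretely, take $\F_1$ with rays $e_1,e_2,-e_2,-e_1-e_2$ and blow up the two fixed points corresponding to the cones $\langle e_1,e_2\rangle$ and $\langle -e_1-e_2,-e_2\rangle$, adding the rays $\rho_1=e_1+e_2$ and $\rho_2=-e_1-2e_2$. Then $\langle e_1^*,\rho_1\rangle=1>0$ and $\langle -e_1^*,\rho_2\rangle=1>0$, so the hypothesis holds; yet $\alpha=-e_1^*+e_2^*$ satisfies $\langle\alpha,\rho_1\rangle=0$ and $\langle\alpha,\rho_2\rangle=-1$, so $\alpha$ survives in $R(N,\Sigma)$. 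The point the paper exploits, and which your outline misses, is that the complexification of the maximal \emph{compact} subgroup only sees the reductive part of $\Aut(X)$: a root $\alpha$ with $-\alpha\notin R(N,\Sigma)$ generates a unipotent subgroup (for $\F_a$, $a\geq 1$, the roots $xe_1^*+e_2^*$ with $-a\leq x\leq 0$ span the additive part $H^0(\CP^1,\mathcal{O}(a))$ of $\Aut(\F_a)\simeq GL_2(\C)/\mu_a\ltimes H^0(\CP^1,\mathcal{O}(a))$) and contributes nothing to a maximal compact subgroup. Hence for $a\geq 1$ one only needs to kill the unique opposite pair $\{e_1^*,-e_1^*\}$, which is exactly what the two inequalities in the hypothesis do; the roots $xe_1^*+e_2^*$ may and sometimes do survive. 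Your plan, by contrast, tries to eliminate them and would fail on the example above.

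Two further slips in your enumeration: for $a\geq 2$ the root system of $\F_a$ is $\{e_1^*,-e_1^*\}\cup\{xe_1^*+e_2^*:\ -a\leq x\leq 0\}$, i.e.\ $a+3$ roots, not ``$e_1^*$ and a single $e_2^*$-type root''; and $-e_1^*$ is a root for every $a\geq 1$, so the condition $\langle -e_1^*,\rho_2\rangle>0$ is needed for all $a\geq 1$, not only $a=1$. Your treatment of the case $a=0$ is essentially the paper's and is fine: a blow-up ray lies in the interior of a quadrant, so $\rho_1=-\rho_2$ forces $\langle\alpha,\rho_1\rangle$ and $\langle\alpha,\rho_2\rangle$ to have opposite, nonzero signs for each of $\pm e_1^*,\pm e_2^*$, and all four roots die.
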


Then, if we start from $X$ a well chosen two points blow-up of some $\F_a$, any blow-up of $X$ will satisfy
the hypothesis required in our deformation results on extremal metrics.

\begin{proof}
 First, we can relate the root system of $X$ with the root system of $\F_a$.
From proposition 3.15. \cite{oda},
$$
R(N,\Sigma)=\lbrace \alpha \in R(N,\Sigma_a) / \forall k, \langle \alpha, \rho_k \rangle \leq 0 \rbrace.
$$
For $a=0$, 
$$
R(N,\Sigma_0)=\lbrace e_1^*,-e_1^*,e_2^*,-e_2^* \rbrace
$$
and the hypothesis in the theorem implies $R(N,\Sigma)=\varnothing$, which implies the result by Demazure's structure theorem.
In the $a\geq 1$ case, from \cite[chapter 9]{gbook}, the automorphism group of $\F_a$ is
$$
Aut(\F_a)\simeq GL_2(\C)/\mu_a \ltimes H^0(\C\P^1, \mathcal{O}(a))
$$
where $\mu_a$ denotes the group of $a^{\text{th}}$ roots of unity. Its maximal
compact subgroup $K_a$ is conjugated to
$$
K_a=U(2)/\mu_a
$$
Then the complexification of a maximal compact subgroup of automorphism is, up to conjugation,
$$
K_a^{\C}=GL_2(\C)/\mu_a.
$$
This is a four dimensional group that contains the torus as a subgroup.
Its Lie algebra contains the Lie algebra of the torus and two other generators corresponding
to two elements of the root system. These elements can be identified as those who leaves globally invariant the zero and infinity sections of 
the ruling of $\F_a$
over $\C\P^1$.
Lets denote $Z=\chi^{e_1^*}$ and $Y=\chi^{e_2^*}$ so that $\F_a$ is obtained
by gluing the four affine charts
$$
X_1=Spec(\C[Z,Y]), X_2=Spec(\C[Z,Y^{-1}]),
$$
$$
X_3=Spec(\C[Z^{-1},Z^{-a}Y]) \text{ and } X_4=Spec(\C[Z^{-1},Z^aY^{-1}])
$$
corresponding to the four cones
$$
\R^{+*}e_1\oplus\R^{+*}e_2, \R^{+*}e_1\oplus\R^{+*}-e_2,
$$
$$
\R^{+*}-e_1-ae_2\oplus\R^{+*}e_2 \text{ and } \R^{+*}-e_1-ae_2\oplus\R^{+*}-e_2.
$$
Then the zero section is given by
$$
\lbrace Y^{-1}=0 \rbrace \cup \lbrace Z^aY^{-1}=0 \rbrace \subset X_2\cup X_4
$$
and the infinity section by
$$
\lbrace Y=0 \rbrace \cup \lbrace Z^{-a}Y=0 \rbrace \subset X_1\cup X_3.
$$
Then, 
$$
R(N,\Sigma_a)=\lbrace e_1^*,-e_1^*, ke_1^*+e_2^* \text{ for } -a\leq k \leq 0 \rbrace
$$
and the $\C$-action induced by $e_1^*$ and $-e_1^*$ on the coordinate functions is computed given Demazure's formula \cite{dem}
$$
\forall \lambda\in \C, e_1^*(\lambda)\cdot Z^pY^q= Z^p(1+\lambda Z)^{-p}Y^q(1+\lambda Z)^{-aq}
$$
and 
$$
\forall \lambda\in \C, e_1^*(\lambda)\cdot Z^pY^q= (Z+\lambda)^pY^q.
$$
In particular, these actions preserve globally the zero and infinity sections, thus $K_a^{\C}$
is generated by the torus and the two groups corresponding to these actions.
The description of $R(N,\Sigma)$ in terms of $R(N,\Sigma_a)$ shows that the hypothesis of the proposition implies
that $e_1^*$ and $-e_1^*$ does not belong to $R(N,\Sigma)$ anymore. Then it only remains the torus in the complexification
of the maximal compact subgroup of $\Aut(X)$.
\end{proof}

\section{Applications}
\label{examples}
We apply the previous results to extremal toric surfaces.

\subsection{Deformations of CSC metrics}
\label{exampleCSC}

We begin this section by the construction of a special family of CSC toric surfaces.
Consider $\C\P^1\times\C\P^1$ endowed with a CSC K\"ahler metric.
Then $\Z_q$ acts by isometries on $\C\P^1\times \C\P^1$:
$$
\forall \xi\in\mu_q, \xi\cdot([x_1,y_1],[x_2,y_2])=([\xi x_1,y_1],[\xi x_2,y_2]).
$$
and the inversion 
$$
\begin{array}{cccc}
 I : & \C\P^1\times \C\P^1 & \rightarrow & \C\P^1\times \C\P^1 \\
&([x_1,y_1],[x_2,y_2]) & \mapsto & ([y_1,x_1],[y_2,x_2]) \\
\end{array}
$$
descends to an isometry on the quotient
$$
\C\P^1\times \C\P^1/\Z_q.
$$
Then, by a result of Rollin and Singer \cite{rs}, the toric resolution $\Xhat$ of $\C\P^1\times \C\P^1/\Z_q$
admits a CSC metric $\om$. This result is based on a gluing construction and working modulo $I$ ensures that all obstruction vanishes.
We want to apply our deformation theory to $\Xhat$ and we need a fan description of this toric manifold.
We compute it in the case $q=3$ but the method and the results extend for all $q\geq 2$.
$\C\P^1\times\C\P^1$ is described by the fan $\Sigma_0$ with:
$$
\Sigma_0^{(1)}=\lbrace e_1,-e_1,e_2,-e_2 \rbrace.
$$
It is represented by Figure 1.
\begin{figure}[htbp]
\psset{unit=0.85cm}
\begin{pspicture}(-4.5,-4.5)(4.5,0)
$$
\xymatrix @M=0mm{
\bullet & \bullet & \bullet & \bullet & \bullet \\
\bullet & \bullet & \bullet & \bullet & \bullet \\
 \bullet & \bullet & \bullet\ar[u]^{e_2}\ar[r]^{e_1}\ar[l]\ar[d]& \bullet & \bullet \\
\bullet & \bullet & \bullet & \bullet & \bullet \\
\bullet & \bullet & \bullet & \bullet & \bullet 
}
$$
\end{pspicture}
\caption{$\C\P^1\times\C\P^1$}
\end{figure}

An affine open cover is given by
$$
X_1=Spec(\C[Z,Y]), X_2=Spec(\C[Z,Y^{-1}]),
$$
$$
X_3=Spec(\C[Z^{-1},Y]) \text{ and } X_4=Spec(\C[Z^{-1},Y^{-1}])
$$
corresponding to the four cones
$$
\R^{+*}e_1\oplus\R^{+*}e_2, \R^{+*}e_1\oplus\R^{+*}-e_2,
$$
$$
\R^{+*}-e_1\oplus\R^{+*}e_2 \text{ and } \R^{+*}-e_1\oplus\R^{+*}-e_2.
$$
Then the action of $\Z_3$ reads
$$
(Z,Y) \mapsto (\xi Z,\xi Y)
$$
so that the fan $\Sigma_s$ of $\C\P^1\times \C\P^1/\Z_3$
is given by the coarsest fan with
$$
\Sigma^{(1)}_s=\lbrace e_1, e_1+ 3 e_2, -e_1, -e_1-3e_2 \rbrace,
$$
(Figure 2).

\begin{figure}[htbp]
\psset{unit=0.85cm}
\begin{pspicture}(-4.5,-7)(4.5,0.5)
$$
\xymatrix @M=0mm{
\bullet & \bullet & \bullet & \bullet & \bullet \\
\bullet & \bullet & \bullet & \bullet & \bullet \\
\bullet & \bullet & \bullet & \bullet & \bullet \\
 \bullet & \bullet & \bullet\ar[r]\ar[ruuu]\ar[l]\ar[lddd]& \bullet & \bullet \\
\bullet & \bullet & \bullet & \bullet & \bullet \\
\bullet & \bullet & \bullet & \bullet & \bullet \\
\bullet & \bullet & \bullet & \bullet & \bullet 
}
$$
\end{pspicture}
\caption{$\C\P^1\times \C\P^1/\Z_3$}
\end{figure}

Indeed, an open cover for the toric manifold associated to $\Sigma_s$ is
$$
X_{s1}=Spec(\C[U,W,U^3W^{-1}]), X_{s2}=Spec(\C[W^{-1},UW^{-1},U^2W^{-1},U^3W^{-1}]),
$$
$$
X_{s3}=Spec(\C[W,U^{-1}W,U ^{-2}W,U^{-3}W]) \text{ and } X_{s4}=Spec(\C[U^{-1},W^{-1},U^{-3}W])
$$
and the change of variables 
$$
U=YZ^{-1} \text{ and } W^{-1}=Z^3
$$
shows that if $X_i=Spec(A_i)$, then $X_{si}=Spec(A_i^{\Z_3})$.
Then, the toric minimal resolution $\Xhat$ is described by the fan $\Sigma$ with
$$
\Sigma^{(1)}=\lbrace e_1, e_1+e_2, e_1+2e_2, e_1+3e_2,e_2, -e_1, -e_1-e_2,- e_1-2e_2, -e_1-3e_2,-e_2\rbrace,
$$
represented by figure 3.

\begin{figure}[htbp]
\psset{unit=0.85cm}
\begin{pspicture}(-4.5,-7)(4.5,0)
$$
\xymatrix @M=0mm{
\bullet & \bullet & \bullet & \bullet & \bullet \\
\bullet & \bullet & \bullet & \bullet & \bullet \\
\bullet & \bullet & \bullet & \bullet & \bullet \\
 \bullet & \bullet & \bullet\ar[r]\ar[ru]\ar[ruu]\ar[ruuu]\ar[u]\ar[l]\ar[ld]\ar[ldd]\ar[lddd]\ar[d]& \bullet & \bullet \\
\bullet & \bullet & \bullet & \bullet & \bullet \\
\bullet & \bullet & \bullet & \bullet & \bullet \\
\bullet & \bullet & \bullet & \bullet & \bullet 
}
$$
\end{pspicture}
\caption{Resolution of $\C\P^1\times \C\P^1/\Z_3$}
\end{figure}
We recognize a twice three-times iterated blow-up of $\C\P^1\times \C\P^1$.
From proposition~\ref{blup}, the complexification of the maximal compact subgroup of $\Aut(\Xhat)$
is $\TT^{\C}$.
Moreover, we compute
$$
H^1(\Xhat,\tsheaf_ {\Xhat})= H^1(e_1^*)\oplus H^1(-e_1^*)\oplus H^1(2e_1^*-e_2^*)\oplus H^1(-2e_1^*+e_2^*)
$$
$$
\oplus H^1(e_2^*-e_1^*)
\oplus H^1(-e_2^*+e_1^*).
$$
As $(e_1^*,-e_1^*)$ forms a balanced pair, from theorem~\ref{equivextremal}, $\Xhat$ endowed with the CSC metric $\om$
admits projective CSC deformations.
Moreover, we see that $\Xhat$ admits $\C^*$-equivariant projective CSC deformations.
For example a point $x_1+y_1$ with $x_1\in H^1(e_1^*) \setminus \lbrace 0 \rbrace$, $y_1\in H^1(-e_1^*)\setminus \lbrace 0 \rbrace$,
and $\vert x_1+y_1\vert$ small enough generates deformations endowed with the $\C^*$-action generated by $e_2$.

\subsection{Description of the projective CSC deformations}
\label{sec:desccompat}
We can understand more precisely the deformations using the theory of $\TT$-invariant divisors developed in \cite{ps}.
Let's consider the simplest example from last section, which is also the example considered in the introduction, section \ref{sec:exintro}.
Let $\Xhat$ be the resolution of $\C\P^1\times \C\P^1 / \Z_2$. Following section~\ref{exampleCSC}
we can endow $\Xhat$ with a CSC metric.
The fan description of $\Xhat$ is on Figure 4.

\begin{figure}[htbp]
\psset{unit=0.85cm}
\begin{pspicture}(-4.5,-2.8)(4.5,0)
$$
\xymatrix @M=0mm{
\bullet & \bullet & \bullet & \bullet & \bullet \\
 \bullet & \bullet & \bullet\ar[r]\ar[ru]\ar[u]\ar[lu]\ar[l]\ar[ld]\ar[d]\ar[rd]& \bullet & \bullet \\
\bullet & \bullet & \bullet & \bullet & \bullet 
}
$$
\end{pspicture}
\caption{Resolution of $\C\P^1\times \C\P^1/\Z_2$}
\end{figure}

This toric variety is the blow-up of $\C\P^1\times\C\P^1$ at the four fixed points under the standard torus action.
Then we compute
$$
H^1(\Xhat,\tsheaf_ {\Xhat})=H^1(e_1^*)\oplus H^1(-e_1^*) \oplus H^1(e_2^*) \oplus H^1(-e_2^*).
$$
and $\Xhat$ admits projective CSC deformations.

Let's denote the weights of the torus action by $R_i=e_i^*$ and $R_{-i}=e_{-i}^*$ for $i\in\lbrace 1,2\rbrace$. Then $\mu(\Sigma)=\lbrace \lbrace 1,-1\rbrace ,  \lbrace 2,-2\rbrace, \lbrace 1,-1,2,-2\rbrace \rbrace$.  Then
$$
S_{\lbrace -1,1\rbrace}=\left ( H^1(e_1^*)\oplus H^1(-e_1^*)\right ) \setminus
\left \lbrace (x_1,x_2)\in H^1(e_1^*)\oplus H^1(-e_1^*) / x_1x_2 =0
\right \rbrace,
$$
$$
S_{\lbrace -2,2\rbrace}=\left (H^1(e_2^*)\oplus H^1(-e_2^*)\right )
\setminus\left \lbrace (x_3,x_4)\in H^1(e_2^*)\oplus H^1(-e_2^*) /
  x_3x_4 =0\right \rbrace
$$
and
$$
S_{\lbrace -1,1,-2, 2\rbrace}=\left (
 H^1(e_1^*)\oplus H^1(-e_1^*)\oplus H^1(e_2^*)\oplus H^1(-e_2^*)
\right )
\setminus \left \lbrace x_1x_2x_3x_4=0\right \rbrace.
$$
The set of non-zero polystable points in  $H^1(\Xhat,\tsheaf_
{\Xhat})$ under the torus action is the union of these sets and we recover the
description of the typical example given in \S \ref{sec:exintro} with
the notations
\begin{equation*}
  \label{eq:intronot}
S_{\lbrace -1,1\rbrace}=U_2', \quad S_{\lbrace -2,2\rbrace}=U_2''
\quad \mbox{ and } \quad  S_{\lbrace -1,1,-2, 2\rbrace}=U_4.  
\end{equation*}

Let's describe the deformations corresponding to $R=e_2^*$ and $\rho=-e_2$ in $H^1(e_2^*)$.
This deformation preserves the $\C^*$ action induced by $e_1$.
We start by down-grading the torus action to a circle action generated by $e_1$ in order to see $\Xhat$ as a $\C^*$-variety.
This description is given by a \textit{divisorial fan} (see for example \cite{ps}), represented by Figure 5.

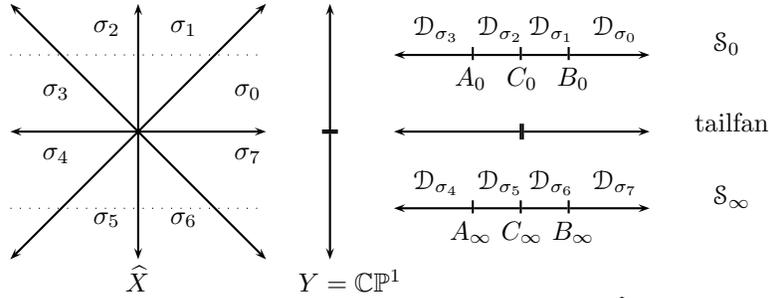
\begin{figure}[htbp]
\psset{unit=0.85cm}
\begin{pspicture}(0,-5)(12,0)
\psframe[linecolor=white](0.5,-4.5)(3.5,-1.5)

\psline{<->}(0,-1)(4,-5)
\psline{<->}(0,-5)(4,-1)
\psline{<->}(0,-3)(4,-3)
\psline{<->}(2,-5)(2,-1)
\psline[linewidth=0.5pt, linestyle=dotted]{-}(0,-1.8)(4,-1.8)
\psline[linewidth=0.5pt, linestyle=dotted]{-}(0,-4.2)(4,-4.2)

\rput[bl]{0}(3.5,-2.5){$\sigma_0$}
\rput[bl]{0}(2.5,-1.5){$\sigma_1$}
\rput[bl]{0}(1.3,-1.5){$\sigma_2$}
\rput[bl]{0}(0.5,-2.5){$\sigma_3$}
\rput[bl]{0}(0.5,-3.5){$\sigma_4$}
\rput[bl]{0}(1.3,-4.5){$\sigma_5$}
\rput[bl]{0}(2.5,-4.5){$\sigma_6$}
\rput[bl]{0}(3.5,-3.5){$\sigma_7$}
\rput[bl]{0}(1.8,-5.5){$\Xhat$}

\psline{<-|}(6,-3)(8,-3)
\psline{|->}(8,-3)(10,-3)

\rput[bl]{0}(10.7,-3){\textnormal{tailfan}}

\psline{<-|}(6,-1.8)(7.25,-1.8)
\psline{-|}(7.25,-1.8)(8,-1.8)
\psline{-|}(8,-1.8)(8.75,-1.8)
\psline{->}(8.75,-1.8)(10,-1.8)
\uput*[270](7.2,-1.8){$A_0$}
\uput*[270](8.8,-1.8){$B_0$}
\uput*[270](8,-1.8){$C_0$}
\rput[bl]{0}(11,-1.8){$\mathcal{S}_0$}
\rput[bl]{0}(9.1,-1.6){$\mathcal{D}_{\sigma_0}$}
\rput[bl]{0}(8.1,-1.6){$\mathcal{D}_{\sigma_1}$}
\rput[bl]{0}(7.3,-1.6){$\mathcal{D}_{\sigma_2}$}
\rput[bl]{0}(6.3,-1.6){$\mathcal{D}_{\sigma_3}$}
 
\psline{<-|}(6,-4.2)(7.25,-4.2)
\psline{-|}(7.25,-4.2)(8,-4.2)
\psline{-|}(8,-4.2)(8.75,-4.2)
\psline{->}(8.75,-4.2)(10,-4.2)
\uput*[270](7.2,-4.2){$A_{\infty}$}
\uput*[270](8.8,-4.2){$B_{\infty}$}
\uput*[270](8,-4.2){$C_{\infty}$}
\rput[bl]{0}(11,-4.2){$\mathcal{S}_{\infty}$}
\rput[bl]{0}(9.1,-4){$\mathcal{D}_{\sigma_7}$}
\rput[bl]{0}(8.1,-4){$\mathcal{D}_{\sigma_6}$}
\rput[bl]{0}(7.3,-4){$\mathcal{D}_{\sigma_5}$}
\rput[bl]{0}(6.3,-4){$\mathcal{D}_{\sigma_4}$}

\psline{|->}(5,-3)(5,-1)
\psline{|->}(5,-3)(5,-5)
\rput[bl]{0}(4.5,-5.5){$Y=\C\P^1$}
\end{pspicture}
\caption{Divisorial fan associated to $\Xhat$.}
\end{figure}
In the language of T-varieties, $\Xhat$ is the T-variety associated to the divisorial fan $\mathcal{S}$ on $Y=\C\P^1$,
with 
$$
\mathcal{S}=\lbrace  \mathcal{D}_{\sigma_0}\otimes 0 +  \mathcal{D}_{\sigma_7} \otimes \infty,\;
 \mathcal{D}_{\sigma_1}\otimes 0 +  \mathcal{D}_{\sigma_6} \otimes \infty,\;
$$
$$
\mathcal{D}_{\sigma_2}\otimes 0 +  \mathcal{D}_{\sigma_5} \otimes \infty,\;
\mathcal{D}_{\sigma_3}\otimes 0 +  \mathcal{D}_{\sigma_4} \otimes \infty \rbrace.
$$
Then the deformation associated to $e_2^*$ is a T-deformation described by the slice decomposition of figure 6 (see \cite{ilv}).
 \begin{figure}[htbp]
\psset{unit=0.85cm}
\begin{pspicture}(0,-6)(12,0)
\psframe[linecolor=white](0.5,-4.5)(3.5,-1.5)

\psline{<-|}(0,-1.6)(1.25,-1.6)
\psline{-|}(1.25,-1.6)(2,-1.6)
\psline{-|}(2,-1.6)(2.75,-1.6)
\psline{->}(2.75,-1.6)(4,-1.6)
\uput*[270](1.2,-1.6){$A_0$}
\uput*[270](2.8,-1.6){$B_0$}
\uput*[270](2,-1.6){$C_0$}
\rput[bl]{0}(4.4,-1.6){$\mathcal{S}_0$}
\rput[bl]{0}(3.1,-1.4){$\mathcal{D}_{\sigma_0}$}
\rput[bl]{0}(2.1,-1.4){$\mathcal{D}_{\sigma_1}$}
\rput[bl]{0}(1.3,-1.4){$\mathcal{D}_{\sigma_2}$}
\rput[bl]{0}(0.3,-1.4){$\mathcal{D}_{\sigma_3}$}
 
\psline{<-|}(0,-4.4)(1.25,-4.4)
\psline{-|}(1.25,-4.4)(2,-4.4)
\psline{-|}(2,-4.4)(2.75,-4.4)
\psline{->}(2.75,-4.4)(4,-4.4)
\uput*[270](1.2,-4.4){$A_{\infty}$}
\uput*[270](2.8,-4.4){$B_{\infty}$}
\uput*[270](2,-4.4){$C_{\infty}$}
\rput[bl]{0}(4.4,-4.4){$\mathcal{S}_{\infty}$}
\rput[bl]{0}(3.1,-4.2){$\mathcal{D}_{\sigma_7}$}
\rput[bl]{0}(2.1,-4.2){$\mathcal{D}_{\sigma_6}$}
\rput[bl]{0}(1.3,-4.2){$\mathcal{D}_{\sigma_5}$}
\rput[bl]{0}(0.3,-4.2){$\mathcal{D}_{\sigma_4}$}

\rput[bl]{0}(1.8,-5.7){$\Xhat$}

\psline{<-|}(6,-1.4)(7.25,-1.4)
\psline{-|}(7.25,-1.4)(8,-1.4)

\psline{->}(8,-1.4)(10,-1.4)
\uput*[270](7.2,-1.4){$A_0$}
\uput*[270](8,-1.4){$C_0$}

\rput[bl]{0}(11,-1.4){$\mathcal{S}_0$}
\rput[bl]{0}(9.1,-1.2){$\mathcal{D}^0_{\sigma_0}$}

\rput[bl]{0}(7.3,-1.2){$\mathcal{D}^0_{\sigma_2}$}
\rput[bl]{0}(6.3,-1.2){$\mathcal{D}^0_{\sigma_3}$}

\psline{<-|}(6,-3)(8,-3)

\psline{-|}(8,-3)(8.75,-3)
\psline{->}(8.75,-3)(10,-3)

\uput*[270](8.8,-3){$B_1$}
\uput*[270](8,-3){$C_1$}
\rput[bl]{0}(11,-3){$\mathcal{S}_1$}
\rput[bl]{0}(9.1,-2.8){$\mathcal{D}^1_{\sigma_0}$}
\rput[bl]{0}(8.1,-2.8){$\mathcal{D}^1_{\sigma_1}$}

\rput[bl]{0}(6.3,-2.8){$\mathcal{D}^1_{\sigma_3}$}

\psline{<-|}(6,-4.6)(7.25,-4.6)
\psline{-|}(7.25,-4.6)(8,-4.6)
\psline{-|}(8,-4.6)(8.75,-4.6)
\psline{->}(8.75,-4.6)(10,-4.6)
\uput*[270](7.2,-4.6){$A_{\infty}$}
\uput*[270](8.8,-4.6){$B_{\infty}$}
\uput*[270](8,-4.6){$C_{\infty}$}
\rput[bl]{0}(11,-4.6){$\mathcal{S}_{\infty}$}
\rput[bl]{0}(9.1,-4.4){$\mathcal{D}_{\sigma_7}$}
\rput[bl]{0}(8.1,-4.4){$\mathcal{D}_{\sigma_6}$}
\rput[bl]{0}(7.3,-4.4){$\mathcal{D}_{\sigma_5}$}
\rput[bl]{0}(6.3,-4.4){$\mathcal{D}_{\sigma_4}$}

\rput[bl]{0}(6.5,-5.7){\textnormal{deformation of }$\Xhat$}

\end{pspicture}
\caption{Slice decomposition for the deformation induced by $(e_2^*,-e_2)$.}
\end{figure}
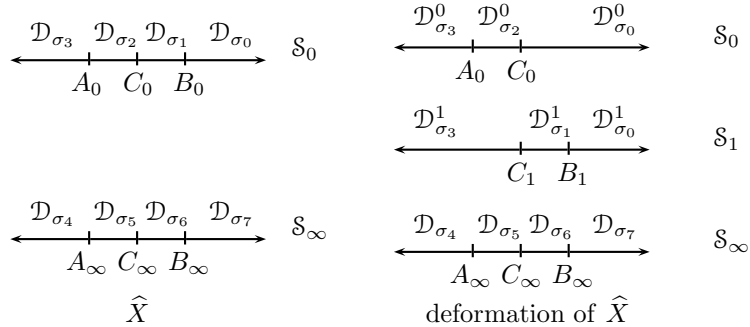
The description of T-invariant divisors from \cite[proposition 3.13. ]{ps} divides these divisors in two types.
Type $1$ divisors are fixed points locus under the $\C^*$-action of $e_1$. Type $2$ divisors
are closure of $\C^*$-orbits and are described by a pair $(Z,v)$
with $Z$ a divisor on $Y=\C\P^1$ and $v$ a vertex of $\mathcal{S}_Z$. On $\Xhat$, type $2$ divisors are:
$$
D_{0,A_0},\; D_{0,B_0},\; D_{0,C_0},\; D_{\infty,A_{\infty}},\; D_{\infty,B_{\infty}}, D_{\infty,C_{\infty}}.
$$
The divisors $D_{0,C_0}$ and $D_{\infty,C_{\infty}}$ are the proper transforms of the fibers of the projection
on the second factor
$$
\C\P^1\times \C\P^1 \rightarrow \C\P^1
$$
on which lie the blown-up points.
The other divisors are the exceptional divisors coming from the $4$ blow-ups of $\C\P^1\times\C\P^1$ that 
lead to $\Xhat$.
Then the divisors of the deformed variety are
$$
D_{0,A_0},\; D_{0,C_0}, \; D_{1,B_1},\; D_{1,C_1},\; D_{\infty,A_{\infty}},\; D_{\infty,B_{\infty}}, \;D_{\infty,C_{\infty}}.
$$
Thus this deformation corresponds to moving the blown-up points corresponding to $D_{0,B_0}$ and $D_{0,A_0}$ on the fixed locus of 
the $\C^*$-action generated by $e_1$, so that they do not lie on the same fiber anymore.
If we consider the polystable deformation generated by $x_2+x_{-2}\in H^1(e^*_2)\oplus H^1(-e_2^*)$,
we obtain projective CSC deformations that preserve a $\C^*$-action. These deformations are described by moving the blown-up points,
 say $P_0$ and $P_{\infty}$,
corresponding to $D_{0,B_0}$ and $ D_{\infty,B_{\infty}}$, on the fixed-set $\C\P^1$ of the $\C^*$-action generated by $e_1$.

Using the symmetry of the situation, the T-deformations that preserves the $\C^*$-action generated by $e_2$ are obtained
by moving the points $P_0$ and $P_{\infty}$ on the fixed locus of this action. Then in the identification
$$
H^1(\Xhat,\tsheaf_ {\Xhat})=H^1(e_1^*)\oplus H^1(-e_1^*) \oplus H^1(e_2^*) \oplus H^1(-e_2^*)\simeq \C^4
$$
the first and third coordinates are identified with the coordinates of $P_0\in\C\P^1\times\C\P^1$ and
the second and the last coordinates correspond to the coordinates of $P_{\infty}\in\C\P^1\times\C\P^1$.

\subsection{Rigid extremal metrics}

Now we want to apply our deformation theory to extremal metrics of non-constant scalar curvature.
We start with $\F_a$ endowed with Calabi's extremal metric in a rational class. Then we consider an action of 
$\Z_p$ on $\F_a$. First, is $\xi$ is a generator of $\mu_p$, the action on $\C\P^1$:
$$
\xi\cdot [u,v] = [\xi u, v]
$$
induces an action on $\mathcal{O}(-1)$ and thus on $\mathcal{O}(-a)$.
Then the action that we consider is the natural extension of this action to $\F_a$.
It acts by isometries and thus we obtain an extremal orbifold $\F_a/\Z_p$.
By a result from \cite{ct}, we know that the minimal resolution using Hirzebruch-Jung strings of this orbifold admits
an extremal metric and we can suppose that this metric defines a polarization. 
Moreover, we can prescribe the $S^1$ action of the extremal vector field on the resolution.
Indeed, the inversion $I$ on the base
$$
I: [u,v]\in \C\P^1 \mapsto [v,u]
$$
lifts to an isometry on $\F_a$ that preserves the $\Z_p$-orbits.
Thus it descends to $\F_a/\Z_p$. Working modulo this inversion, 
we only preserve the $S^1$-action induced by the vertical vector field.
We obtain an extremal metric on a resolution $\Xhat$ of $\F_a/\Z_p$
with a vertical extremal vector field.

In order to apply our deformation theory to this manifold, we need a fan description of
this toric manifold. We will proceed to the description in the case $\F_2/\Z_3$,
even if this discussion extends to the other cases.

$\F_2$ is described by the fan $\Sigma_2$ with:
$$
\Sigma_2^{(1)}=\lbrace e_1,-e_1-2e_,e_2,-e_2 \rbrace,
$$
(Figure 7).
\begin{figure}[htbp]
\psset{unit=0.85cm}
\begin{pspicture}(-4.5,-4.5)(4.5,1)

$$
\xymatrix @M=0mm{
\bullet & \bullet & \bullet & \bullet & \bullet \\
\bullet & \bullet & \bullet & \bullet & \bullet \\
 \bullet & \bullet & \bullet\ar[r]^{e_1}\ar[u]^{e_2}\ar[d]\ar[ldd]& \bullet & \bullet \\
\bullet & \bullet & \bullet & \bullet & \bullet \\
\bullet & \bullet & \bullet & \bullet & \bullet 
}
$$
\end{pspicture}
\caption{$\F_2$}
\end{figure}

and an affine open cover is given by
$$
X_1=Spec(\C[Z,Y]), X_2=Spec(\C[Z,Y^{-1}]),
$$
$$
X_3=Spec(\C[Z^{-1},Z^{-2}Y]) \text{ and } X_4=Spec(\C[Z^{-1},Z^2Y^{-1}]).
$$
corresponding to the four cones
$$
\R^{+*}e_1\oplus\R^{+*}e_2, \R^{+*}e_1\oplus\R^{+*}-e_2,
$$
$$
\R^{+*}-e_1-2e_2\oplus\R^{+*}e_2 \text{ and } \R^{+*}-e_1-2e_2\oplus\R^{+*}-e_2.
$$
Then the action of $\Z_3$ reads
$$
(Z,Y) \mapsto (\xi Z,\xi Y)
$$
so that the fan $\Sigma_s$ of $\F_2/\Z_3$
is given by the coarsest fan with
$$
\Sigma^{(1)}_s=\lbrace e_2, 3e_1- e_2, -e_2, -3e_1-e_2 \rbrace
$$
represented by figure 8.
\begin{figure}[htbp]
\psset{unit=0.85cm}
\begin{pspicture}(-3.5,-4.5)(5,1)
 
$$
\xymatrix @M=0mm{
\bullet & \bullet & \bullet & \bullet & \bullet & \bullet & \bullet\\
\bullet &\bullet & \bullet & \bullet & \bullet & \bullet & \bullet\\
\bullet & \bullet & \bullet & \bullet\ar[u]\ar[rrrd]\ar[d]\ar[llld]& \bullet & \bullet& \bullet \\
\bullet &\bullet & \bullet & \bullet & \bullet & \bullet & \bullet\\
\bullet &\bullet & \bullet & \bullet & \bullet & \bullet & \bullet
}
$$
\end{pspicture}
\caption{$\F_2/\Z_3$}
\end{figure}

Indeed, an open cover for the toric manifold associated to $\Sigma_s$ is
$$
X_{s1}=Spec(\C[U,W,UW^2,UW^3]), X_{s2}=Spec(\C[U,W^{-1},U^{-1}W^{-3}]),
$$
$$
X_{s3}=Spec(\C[U^{-1},W,U^{-1}W^2,U^{-1}W^3]) \text{ and } X_{s4}=Spec(\C[U^{-1},W^{-1},UW^{-3}]).
$$
and the change of variables 
$$
W=ZY^{-1} \text{ and } U=Z^3
$$
shows that if $X_i=Spec(A_i)$, then $X_{si}=Spec(A_i^{\Z_3})$.

Then the toric minimal resolution $\Xhat$ is described by the fan $\Sigma$ with
$$
\Sigma^{(1)}=\lbrace e_1, 3e_1-e_2, 2e_1-e_2, e_1-e_2,-e_2, -e_1, -3e_1-e_2,-2 e_1-e_2, -e_1-e_2,e_2\rbrace .
$$
We represent it Figure 9.

\begin{figure}[htbp]
\psset{unit=0.85cm}
\begin{pspicture}(-3.5,-4.5)(5,1)
 
$$
\xymatrix @M=0mm{
\bullet &\bullet & \bullet & \bullet & \bullet & \bullet & \bullet \\
\bullet &\bullet & \bullet & \bullet & \bullet & \bullet & \bullet\\
\bullet & \bullet & \bullet & \bullet\ar[r]\ar[rrrd]\ar[rrd]\ar[rd]\ar[d]\ar[l]\ar[llld]\ar[lld]\ar[ld]\ar[u]& \bullet & \bullet & \bullet\\
\bullet &\bullet & \bullet & \bullet & \bullet & \bullet& \bullet \\
\bullet &\bullet & \bullet & \bullet & \bullet & \bullet & \bullet
}
$$
\end{pspicture}
\caption{ $\Xhat$}
\end{figure}

We recognize a twice three-times iterated blow-up of $\C\P^1\times \C\P^1$.
Then the vertical action corresponds to the action induced by $e_2$ in $\TT^{\C}=N\otimes_{\Z}\C^*$.
We compute:
$$
H^1(\Xhat,\tsheaf_ {\Xhat})= H^1(e_2^*)\oplus H^1(-e_1^*-e_2^*)\oplus H^1(-e_1^*-2e_2^*)\oplus H^1(e_1^*-e_2^*)\oplus H^1(e_1^*-2e_2^*).
$$
We see that this manifold admits several polystable deformations that preserves $S^1$ actions,
but none that preserves the extremal vector field. Thus $\Xhat$ admits no projective extremal deformation.

However, if we blow-up twice this manifold, working modulo the inversion and using the theorem of 
Arezzo Pacard and Singer \cite{aps}, we obtain 
an extremal metric on the manifold $\Xhat_2$ described by the fan with
$$
\Sigma^{(1)}(2)=\Sigma^{(1)}\cup\lbrace  e_1+e_2, -e_1+e_2\rbrace .
$$
Here, 
$$
H^1(\Xhat_2,\tsheaf_ {\Xhat_2})=  H^1(-e_2^*)\oplus H^1(e_2^*)\oplus H^1(-e_1^*-e_2^*)\oplus H^1(-e_1^*-2e_2^*)
$$
$$
\oplus H^1(e_1^*-e_2^*)\oplus H^1(e_1^*-2e_2^*)\oplus H^1(e_1^*)\oplus H^1(-e_1^*).
$$
Then
$$
H^1(\Xhat_2,\tsheaf_ {\Xhat_2})^{\TT_f}=H^1(e_1^*)\oplus H^1(-e_1^*)
$$
and by theorem~\ref{equivextremal}, $\Xhat_2$ admits projective extremal deformations.

\end{document}